\numberwithin{equation}{section}
\numberwithin{figure}{section}
\numberwithin{table}{section}
\theoremstyle{remark}
\newtheorem{notation}{\protect\notationname}
\theoremstyle{plain}
\newtheorem{thm}{\protect\theoremname}[section]
\theoremstyle{plain}
\newtheorem{prop}{\protect\propositionname}[section]
\date{}
\newcommand{\Auteur}{\scriptsize K.T. S. SOBAH and A. S. d'ALMEIDA}
\newcommand{\TitreArticle}{\scriptsize EXISTENCE AND UNIQUENESS OF SOLUTION FOR AN INITIAL-BOUNDARY VALUE PROBLEM}
\newcommand{\R}{\mathbb{R}}
\newcommand{\p}{\ensuremath{\partial}}
\providecommand{\notationname}{Notation}
\providecommand{\propositionname}{Proposition}
\providecommand{\theoremname}{Theorem}
\begin{document}
\title{ON THE EXISTENCE AND UNIQUENESS OF CLASSICAL SOLUTION FOR AN INITIAL-BOUNDARY
VALUE PROBLEM FOR A DISCRETE BOLTZMANN SYSTEM IN TWO SPACE DIMENSIONS}
\author{Koudzo Togbévi Selom SOBAH\textsuperscript{\textsuperscript{1}}\textsuperscript{}
and Amah Séna D'ALMEIDA\textsuperscript{\textsuperscript{2}}}

\maketitle

\textsuperscript{1,2}Department of Mathematics, Faculty of Sciences
and Laboratory of Mathematics and Applications, University of Lomé,
Lomé, TOGO

\begin{abstract}
The initial-boundary value problem for the two-dimensional regular
four-velocity discrete boltzmann system is analyzed in a rectangle.
The existence and uniqueness of a classical global positive solution,
bounded with its first partial derivatives are proved for a range
of bounded data by the use of fixed points tools. A bound for the
solution and its partial derivatives is provided. 
\end{abstract}
\textbf{Key words and phrases:} discrete velocity models, initial-boundary
value, existence, uniqueness, fixed point theorems.

\textbf{2020 Mathematics Subject Classification:} 76A02, 76M28
\section*{Author's Note}	
	This is the accepted version of the article published in \textit{Advances in Mathematics: Scientific Journal}, Vol.~14, No.~1 (2025), pp.~73--102.\\
	The final published version is available at: \url{https://doi.org/10.37418/amsj.14.1.5} 

\section{Introduction }

Discrete velocity models of gas are simplified models of the Boltzmann
equation obtained by assuming that the velocities of the gas particles
belong to a finite set of vectors. The nonlinear integrodifferential
Boltzmann equation is replaced by a system of semilinear hyperbolic
equations associated to the number densities of the particles having
the given velocities. After the pionner works of Broadwell \cite{3,4}
who introduced the first physically convenient models in the sense
that they can model actual gas flows and the theory for the general
discrete velocity model for binary collision given by Gatignol \cite{1},
the discrete kinetic theory of gas develops in two directions: the
mathematical study of the kinetic equations encompassing the existence
and the uniqueness theory as well as the construction of exact solutions
and the modelling and the resolution of flow problems.

The existence theory for the discrete velocity models of the Boltzmann
equation, not only supports the mathematical understanding of these
models but also underpins the reliability of numerical methods used
in engineering and physics. The good mathematical structure of the
kinetic equations associated to these models lead to the rapid development
of the mathematical theory of discrete velocity models. Many results
concerning the proof of the global existence and the uniqueness of
the solutions of the initial-boundary value problem have been obtained
in the one-dimensional case \cite{5,6,11,14,16}. Most of these studies
concern the so-called three velocity and four velocity Broadwell models
which are the symmetrical models obtained from the six and the eight
velocity spatial models of Broadwell by a symmetry with respect to
any coordinate plane and axis respectively. Exact solutions have been
proposed for the three velocity Broadwell model\cite{7}.\\

The situation is quite different for multi-dimensional problems even
in the steady case. In \cite{2}, using techniques based on the fractional
steps method, the problem of existence and uniqueness of the solution
of the initial boundary value problem is solved for the two velocity
Carleman model. In the steady case, the boundary value problem for
the general two-dimensional four velocity Broadwell model is investigated
in \cite{10,defoou,d almeida,Nicou=00003D0000E9} the existence of
a solution is proved and exact solutions are built. An extension to
a fifteen velocity three speed discrete model is done in \cite{d almeida}.

In this work, the initial-boundary value problem for a two-dimensional
four velocity model of Broadwell is considered in a rectangle; we
prove for a range of bounded initial and boundary data, the existence
and uniqueness of the classical global positive solution which is
bounded with its first partial derivatives and we provide a bound
for the solution and its partial derivatives.

The paper is organized as follows. In section \ref{presentationequations}
we briefly describe the model, state the initial-boundary value problem
and present the main result of the paper which is proved in section
\ref{sec:Resolution}. In section \ref{sec:Positivite} we establish
the positivity of the solution of the initial-boundary value problem.

\section{Statement of the problem\label{presentationequations}}

\subsection{The discrete velocity model}

The general plane four velocity discrete models of Broadwell denoted
by $B_{\theta}$, $\theta\in\left[0,\dfrac{\pi}{2}\right[$ are among
the simplest discrete velocity models and have been used to study
initial and boundary value problems in one dimension \cite{4,9,defoou}
and to build exact solutions \cite{10,Nicou=00003D0000E9}. In the
basis $(\vec{e_{1}},\vec{e_{2}})$ of orthonormal reference $(O,\vec{e_{1}},\vec{e_{2}})$
of the plane ${\R}^{2}$ its velocities are $\vec{u_{1}}=c(cos\theta,sin\theta)$,
$\vec{u_{2}}=c(-sin\theta,cos\theta)$, $\vec{u_{3}}=-\vec{u_{2}}$,
$\vec{u_{4}}=-\vec{u_{1}}$, where $\theta=angle(\vec{e_{1}},\vec{u_{1}})$
accounts of the orientation of the discrete velocity model with respect
to the reference.

Let $N_{i}(t,x,y)$ be the number density of the gas molecules with
velocity $\vec{u}_{i},\ i=1,2,3,,4$ at the time $t$ and at the position
$M(x,y)$ the kinetic equations of the model are:

\begin{equation}
\left\{ \begin{array}{lcl}
\medskip\dfrac{\p N_{1}}{\p t}+ccos\theta\dfrac{\p N_{1}}{\p x}+csin\theta\dfrac{\p N_{1}}{\p y} & = & Q\\
\medskip\dfrac{\p N_{2}}{\p t}-csin\theta\dfrac{\p N_{2}}{\p x}+ccos\theta\dfrac{\p N_{2}}{\p y} & = & -Q\\
\medskip\dfrac{\p N_{3}}{\p t}+csin\theta\dfrac{\p N_{3}}{\p x}-ccos\theta\dfrac{\p N_{3}}{\p y} & = & -Q\\
\medskip\dfrac{\p N_{4}}{\p t}-ccos\theta\dfrac{\p N_{4}}{\p x}-csin\theta\dfrac{\p N_{4}}{\p y} & = & Q.
\end{array}\right.\label{b2}
\end{equation}

\[
Q=2cS\left(N_{2}N_{3}-N_{1}N_{4}\right).
\]

\begin{figure}[H]
\centering{}\includegraphics[scale=0.4]{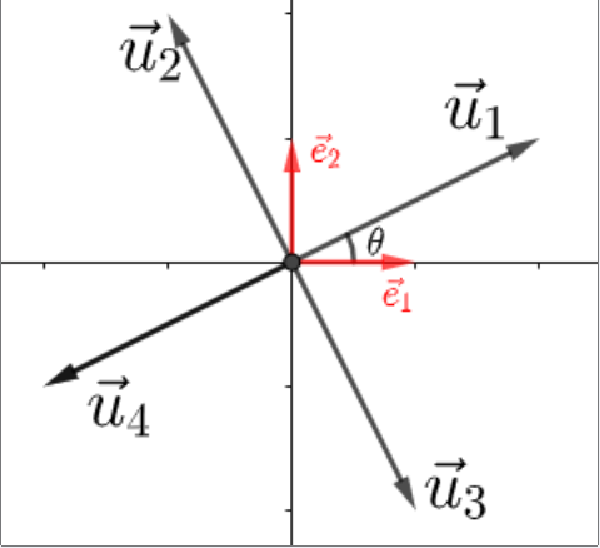} \caption{The model $B_{\theta}$}
\label{modelfigure1}
\end{figure}

The total density $\rho$ and the macroscopic velocity $\overrightarrow{U}(U,V)$
of a gas described by the model are defined by: 
\begin{eqnarray}
\begin{array}{l}
\rho=N_{1}+N_{2}+N_{3}+N_{4}\\
\rho U=\cos(\theta)[N_{1}-N_{4}]-\sin(\theta)[N_{2}-N_{3}]\\
\rho V=\sin(\theta)[N_{1}-N_{4}]+\cos(\theta)[N_{2}-N_{3}]\\
\end{array}\label{macroscopicvariables}
\end{eqnarray}
The Maxwellian densities of the model associated with the macroscopic
variables $\rho$, $U$ and $V$ are given by the relations: 
\begin{eqnarray}
\begin{array}{l}
N_{1M}=\dfrac{\rho}{4}\left[1+\cos(2\theta)\left(U^{2}-V^{2}\right)+2UV\sin(2\theta)+2U\cos(\theta)+2V\sin(\theta)\right]\\
\\
N_{2M}=\dfrac{\rho}{4}\left[1-\cos(2\theta)\left(U^{2}-V^{2}\right)-2UV\sin(2\theta)+2V\cos(\theta)-2U\sin(\theta)\right]\\
\\
N_{3M}=\dfrac{\rho}{4}\left[1-\cos(2\theta)\left(U^{2}-V^{2}\right)-2UV\sin(2\theta)-2V\cos(\theta)+2U\sin(\theta)\right]\\
\\
N_{4M}=\dfrac{\rho}{4}\left[1+\cos(2\theta)\left(U^{2}-V^{2}\right)+2uv\sin(2\theta)-2U\cos(\theta)-2V\sin(\theta)\right]
\end{array}\label{maxwellian}
\end{eqnarray}
The mild problem in consideration in the sequel results from the modelling
of a gas flow in a rectangular box by the model $B_{0}$.

\subsection{Initial-boundary value problem }

Given $\Omega=\left[a_{1},b_{1}\right]\times\left[a_{2},b_{2}\right]\subset\R^{2}$,
$I=\left[0;T\right]\subset\R$ , we set $\mathscr{P}=\left[0;T\right]\times\left[a_{1};b_{1}\right]\times\left[a_{2};b_{2}\right]$
and consider the system $\Sigma^{0}$ defined by: 
\begin{align}
{\displaystyle {\scriptstyle {\textstyle \dfrac{\partial N_{1}}{\partial t}+c\dfrac{\partial N_{1}}{\partial x}=}}} & {\scriptstyle {\textstyle Q(N),\;\left(t,x,y\right)\in\mathring{\mathscr{P}}}}\label{eq:erter}\\
\dfrac{\partial N_{2}}{\partial t}+c\dfrac{\partial N_{2}}{\partial y}= & -Q(N),\;\left(t,x,y\right)\in\mathring{\mathscr{P}}\\
\dfrac{\partial N_{3}}{\partial t}-c\dfrac{\partial N_{3}}{\partial y}= & -Q(N),\;\left(t,x,y\right)\in\mathring{\mathscr{P}}\\
\dfrac{\partial N_{4}}{\partial t}-c\dfrac{\partial N_{4}}{\partial x}= & Q(N),\;\left(t,x,y\right)\in\mathring{\mathscr{P}}\label{eq:zzzfra}\\
N_{i}\left(0,x,y\right)= & N_{i}^{0}\left(x,y\right),\;\left(x,y\right)\in\left[a_{1};b_{1}\right]\times\left[a_{2};b_{2}\right],i=1,\cdots,4\label{eq:lsos}\\
N_{1}\left(t,a_{1},y\right)= & N_{1}^{-}\left(t,y\right),\;\left(t,y\right)\in\left[0;T\right]\times\left[a_{2};b_{2}\right]\label{eq:losso}\\
N_{2}\left(t,x,a_{2}\right)= & N_{2}^{-}\left(t,x\right),\;\left(t,x\right)\in\left[0;T\right]\times\left[a_{1};b_{1}\right]\label{eq:mppos}\\
N_{3}\left(t,x,b_{2}\right)= & N_{3}^{+}\left(t,x\right),\;\left(t,x\right)\in\left[0;T\right]\times\left[a_{1};b_{1}\right]\label{eq:mpsss}\\
N_{4}\left(t,b_{1},y\right)= & N_{4}^{+}\left(t,y\right),\;\left(t,y\right)\in\left[0;T\right]\times\left[a_{2};b_{2}\right]\label{eq:lsoo}
\end{align}
\begin{equation}
N_{1}^{0}\left(a_{1},y\right)=N_{1}^{-}\left(0,y\right),\;y\in\left[a_{2};b_{2}\right]\label{eq:kqiiq}
\end{equation}
\begin{equation}
N_{2}^{0}\left(x,a_{2}\right)=N_{2}^{-}\left(0,x\right),\;x\in\left[a_{1};b_{1}\right]\label{eq:qkkqo}
\end{equation}
\begin{equation}
N_{3}^{0}\left(x,b_{2}\right)=N_{3}^{+}\left(0,x\right),\;x\in\left[a_{1};b_{1}\right]\label{eq:ssqz}
\end{equation}
\begin{equation}
N_{4}^{0}\left(b_{1},y\right)=N_{4}^{+}\left(0,y\right),\;y\in\left[a_{2};b_{2}\right]\label{eq:jqiiq}
\end{equation}

where 
\begin{equation}
Q\left(N\right)=2cS\left(N_{2}N_{3}-N_{1}N_{4}\right),\label{eq:solls}
\end{equation}
$N_{i}^{0},i=1,\cdots,4,$ are the initial data and $N_{1}^{-},N_{2}^{-},N_{3}^{+},N_{4}^{+}$
the boundary data.We assume in the sequel that $N_{i}^{0},\left(i=1,2,3,4\right),N_{1}^{-},N_{2}^{-},N_{3}^{+},N_{4}^{+}$
are non-negative and continuous, that they have bounded and continuous
first order partial derivatives. In the sequel, $C\left(X,Y\right)$
denotes the set of continuous functions from the set $X$ into the
set $Y.$ Our aim is to prove for the system $\Sigma^{0}$, the existence
of non-negative solutions in $C\left(\begin{array}{r}
\mathscr{P}\end{array};\R^{4}\right)$ (thus bounded solutions) that have bounded first order partial derivatives.

\subsection{Main theorem}
\begin{notation}
For every function $u:X\longrightarrow\R$ whose domain is $D_{u}\subset\begin{array}{r}
X\end{array}$ such that $u$ is bounded on $D_{u},$ let us denote $\left\Vert u\right\Vert _{\infty}=\sup_{x\in D_{u}}\left|u\left(x\right)\right|$
and for $U=\left(U_{i}\right)_{i=1}^{4}:X\longrightarrow\R^{4},$
such that every $U_{i}:D_{u}\left(\subset\begin{array}{r}
X\end{array}\right)\longrightarrow\R$ is bounded, $\left\Vert U\right\Vert =\max_{1\leq i\leq4}\left\Vert U_{i}\right\Vert _{\infty}.$ 
\end{notation}
\begin{notation}
For $u:D_{u}\longrightarrow\R$ such that $u\equiv u\left(\alpha,\beta\right)$
is bounded on $D_{u}$ and such that $\dfrac{\partial u}{\partial\alpha},\dfrac{\partial u}{\partial\beta}$
are bounded on their domain, let us set

$\left\Vert u\right\Vert _{1}=\max\left\{ \left\Vert u\right\Vert _{\infty},\left\Vert \dfrac{\partial u}{\partial\alpha}\right\Vert _{\infty},\left\Vert \dfrac{\partial u}{\partial\beta}\right\Vert _{\infty}\right\} .$ 
\end{notation}
Let us consider the following parameters

\[
p\equiv4cS\left(1+2\cdot\max\Biggl\{4T;{\displaystyle \frac{2}{c}}\left(b_{1}-a_{1}\right);{\displaystyle \frac{1}{c}}\left(b_{2}-a_{2}\right)\Biggr\}\right);
\]
\begin{multline*}
q\equiv\\
\max\Biggl\{\max\left\{ 1;2c\right\} \left\Vert N_{1}^{0}\right\Vert _{1};\left(1+c\right)\left\Vert N_{1}^{-}\right\Vert _{1};\max\left\{ 1;2c\right\} \left\Vert N_{2}^{0}\right\Vert _{1};\max\left\{ 2;\left(1+c\right)\right\} \left\Vert N_{2}^{-}\right\Vert _{1};\\
\max\left\{ 1;2c\right\} \left\Vert N_{3}^{0}\right\Vert _{1};\max\left\{ 2;\left(1+c\right)\right\} \left\Vert N_{3}^{+}\right\Vert _{1};\max\left\{ 1;2c\right\} \left\Vert N_{4}^{0}\right\Vert _{1};\left(2+c\right)\left\Vert N_{4}^{+}\right\Vert _{1}\Biggr\}.
\end{multline*}

We prove in the sequel the following result 
\begin{thm}
\label{thm:Suppose-.-Then}Suppose $pq\leq\dfrac{1}{4}$. Then the
system $\Sigma^{0}$ \ref{eq:erter}-\ref{eq:lsoo} has an unique
non-negative solution 
\[
N=\left(N_{1},N_{2},N_{3},N_{4}\right)\in C\left(\left[0;T\right]\times\left[a_{1},b_{1}\right]\times\left[a_{2},b_{2}\right];\R^{4}\right)
\]
such that

\begin{align}
{\displaystyle \left\Vert N\right\Vert } & \leq\dfrac{1+\sqrt{1-4pq}}{2p},
\end{align}

$\dfrac{\partial N_{i}}{\partial t},\dfrac{\partial N_{i}}{\partial x},\dfrac{\partial N_{i}}{\partial y}$
are defined in $\left]0;T\right[\times\left]a_{1},b_{1}\right[\times\left]a_{2},b_{2}\right[$
except possibly on a finite number of planes including the four planes
with respective equations 
\begin{equation}
-ct+x=a_{1};-ct+y=a_{2};ct+y=b_{2};ct+x=b_{1};\label{eq:ldoopz}
\end{equation}

$\dfrac{\partial N_{i}}{\partial t},\dfrac{\partial N_{i}}{\partial x},\dfrac{\partial N_{i}}{\partial y}$
are continuous and bounded, for $i=1,2,3,4,$ and satisfy

\begin{multline}
{\displaystyle \max_{1\leq i\leq4}\left\{ \left\Vert N_{i}\right\Vert _{\infty},\left\Vert \dfrac{\partial N_{i}}{\partial t}\right\Vert _{\infty},\left\Vert \dfrac{\partial N_{i}}{\partial x}\right\Vert _{\infty},\left\Vert \dfrac{\partial N_{i}}{\partial y}\right\Vert _{\infty}\right\} }\leq\\
\max\left\{ 1,\dfrac{2}{c}\right\} \dfrac{1+\sqrt{1-4pq}}{2p}.\label{eq:loosqz}
\end{multline}
\end{thm}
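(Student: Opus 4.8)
\emph{Plan of proof.} The natural route is to recast $\Sigma^{0}$ as a fixed-point equation $N=\mathcal{T}N$ for an integral operator $\mathcal{T}$ assembled from the characteristics of the four transport equations, and to run the Banach contraction principle on a closed ball whose radius is tied to the larger root $R_{+}=\dfrac{1+\sqrt{1-4pq}}{2p}$ of the quadratic $pR^{2}-R+q$; this root being real is exactly the content of the hypothesis $pq\le\frac14$, so that is where it enters. First I would integrate each of (\ref{eq:erter})--(\ref{eq:zzzfra}) along its characteristic and trace it back to the inflow part of $\p\mathscr{P}$. The characteristic of $N_{1}$ through $(t,x,y)$ is $\{x-ct=\mathrm{const}\}$ and meets $\{t=0\}$ when $x-ct\ge a_{1}$ and the face $\{x=a_{1}\}$ otherwise, the two regimes being separated by the plane $-ct+x=a_{1}$; similarly for $N_{2},N_{3},N_{4}$ with the other three planes in (\ref{eq:ldoopz}). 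This yields, for every component, $N_{i}=\bigl(\text{datum carried along the characteristic}\bigr)+\int(\pm Q(N))\,ds$, the datum being $N_{i}^{0}$ in the first regime and the relevant boundary value ($N_{1}^{-},N_{2}^{-},N_{3}^{+},N_{4}^{+}$) in the second, and the length of the integration segment being $\le T$ in the first and $\le\frac{b_{1}-a_{1}}{c}$ or $\frac{b_{2}-a_{2}}{c}$ in the second, hence in all cases $\le\max\{4T;\frac{2}{c}(b_{1}-a_{1});\frac{1}{c}(b_{2}-a_{2})\}$, the quantity inside $p$. Rewriting the loss part of $Q$ as a linear damping ($\p_{t}N_{1}+c\,\p_{x}N_{1}+2cS\,N_{4}N_{1}=2cS\,N_{2}N_{3}$, and likewise for $N_{2},N_{3},N_{4}$) and solving by the integrating factor gives an equivalent form of these formulas in which the associated operator visibly maps the non-negative cone into itself — all damping factors lie in $(0,1]$ — so, starting the iteration from $0$, the fixed point will be non-negative; a fixed point of $\mathcal{T}$ is precisely a mild solution of $\Sigma^{0}$.

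Next I would carry out the fixed point in the space $\mathcal{E}$ of functions $N:\mathscr{P}\to\R^{4}$ that are continuous, piecewise $C^{1}$ with singular set contained in a fixed finite union of characteristic planes including the four planes (\ref{eq:ldoopz}), and have bounded first partials, normed by $\max_{i}\max\{\|N_{i}\|_{\infty},\|\p_{t}N_{i}\|_{\infty},\|\p_{x}N_{i}\|_{\infty},\|\p_{y}N_{i}\|_{\infty}\}$. Because integrating a merely continuous function along the characteristics only propagates the datum/endpoint singularities, and these sit on the four planes of (\ref{eq:ldoopz}), $\mathcal{T}$ maps $\mathcal{E}$ into itself; and differentiating the characteristic formulas, together with the elementary bilinear bounds $|Q(N)|\le4cS\|N\|^{2}$, $|Q(N)-Q(M)|\le8cS\max(\|N\|,\|M\|)\|N-M\|$ and the analogous bounds for the derivatives of $Q(N)$, the length bound above, and the identities $c\,\p_{x}N_{1}=Q(N)-\p_{t}N_{1}$, $c\,\p_{x}N_{4}=\p_{t}N_{4}-Q(N)$, $c\,\p_{y}N_{2}=-Q(N)-\p_{t}N_{2}$, $c\,\p_{y}N_{3}=Q(N)+\p_{t}N_{3}$, one obtains on a suitable closed ball — of radius $\max\{1;\frac2c\}R_{+}$, the bound targeted in (\ref{eq:loosqz}) — that $\mathcal{T}$ is a self-map and a contraction, the self-map and contraction inequalities reducing, after accounting for all the constants, to $pq\le\frac14$. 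The definitions of $q$, of $p=4cS(1+2\max\{\ldots\})$ and of the weight $\max\{1;\frac2c\}$ are exactly what records this accounting: the weights $\max\{1;2c\}$, $1+c$, $\max\{2;1+c\}$, $2+c$ attached to the $\|\cdot\|_{1}$-terms in $q$ arise from letting $\p_{t}$, $\p_{x}$ or $\p_{y}$ fall on the carried datum, picking up a factor $c$ or $\frac1c$. Banach's theorem then produces a non-negative solution $N\in\mathcal{E}$ satisfying (\ref{eq:loosqz}), whose partials are thus defined, continuous and bounded off the stated planes.

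It remains to get the sharper bound $\|N\|\le R_{+}$ and uniqueness among all continuous solutions. For the bound, one uses the sup-norm-only estimate $\|N(t,\cdot)\|_{\infty}\le q+p\sup_{[0,t]\times\Omega}|N|^{2}$ (no derivatives involved, since $\|Q(N)\|\le4cS\|N\|^{2}$): writing $\phi(t)=\sup_{[0,t]\times\Omega}|N|$, which is continuous and non-decreasing with $\phi(0)\le q\le\frac{1-\sqrt{1-4pq}}{2p}$, the relation $p\phi(t)^{2}-\phi(t)+q\ge0$ forces $\phi$ to stay on the branch $\le\frac{1-\sqrt{1-4pq}}{2p}\le R_{+}$ for all $t$. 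The very same estimate shows that any continuous solution has norm $\le R_{+}$, hence lies in the Banach ball, hence coincides with $N$; so uniqueness holds in $C(\mathscr{P};\R^{4})$.

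The second paragraph carries the weight of the argument. I expect the main obstacle to be precisely the bookkeeping there — arranging the first-order derivative estimates so that self-map and contraction close at exactly the threshold $pq\le\frac14$ with the stated $p$, $q$ and weight — together with the local analysis along the four characteristic planes (existence and uniform boundedness of the one-sided partials there, and the verification that $\mathcal{T}$ generates no further singular planes).
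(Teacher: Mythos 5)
Your plan diverges from the paper in its key mechanism: the paper straightens the characteristics by the change of variables $\eta_{1}=x/c$, $\eta_{2}=t/2-x/2c+y/2c$, $\eta_{3}=t/2-x/2c-y/2c$, and then applies \emph{Schauder's} theorem to the integral operator $\mathcal{T}$ on the convex set $\mathscr{M}_{R}$ of functions with bounded first partials; the derivative bounds are used only to get equicontinuity (Arzel\`a--Ascoli) and the self-map property, which reduces to $pR^{2}+q\le R$, i.e.\ $R\in\bigl[\frac{1-\sqrt{1-4pq}}{2p},\frac{1+\sqrt{1-4pq}}{2p}\bigr]$. You instead propose Banach's contraction principle in the derivative-including norm on a ball of radius $\max\{1,\frac{2}{c}\}R_{+}$, and this is precisely where the argument breaks. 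First, a self-map in any norm that dominates the sup norm forces the quadratic inequality $p\rho^{2}+q\le\rho$, hence $\rho\le R_{+}$; the radius $\max\{1,\frac{2}{c}\}R_{+}$ exceeds $R_{+}$ whenever $c<2$, so the self-map cannot hold with the stated constants (in the paper the factor $\max\{1,\frac{2}{c}\}$ appears only \emph{a posteriori}, when the $\eta$-derivative bounds are converted back to $(t,x,y)$-derivatives). Second, the Lipschitz constant of the quadratic operator on a ball of radius $R$ is of order $2pR$ (your own estimate $|Q(M)-Q(N)|\lesssim(\|M\|+\|N\|)\|M-N\|$ gives this), and any admissible $R$ satisfies $R\ge\frac{1-\sqrt{1-4pq}}{2p}$; at the boundary case $pq=\frac14$ the only admissible radius is $R=\frac{1}{2p}$, so the best Lipschitz constant is $1$ and strict contraction is unavailable exactly at the stated threshold. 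So the assertion that ``self-map and contraction close at exactly $pq\le\frac14$ with the stated $p$, $q$ and weight'' is not just unproved bookkeeping: for $pq=\frac14$ it is false as stated, and this is why the paper resorts to a compactness (Schauder) argument rather than contraction.

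Two further steps need repair. Your uniqueness claim ``any continuous solution has norm $\le R_{+}$, hence lies in the Banach ball, hence coincides with $N$'' conflates the class $C(\mathscr{P};\R^{4})$ with the ball of the derivative norm in which the (claimed) contraction lives; a merely continuous solution is not known to have bounded partials, so membership in that ball is not available. The paper's uniqueness argument is sup-norm only: from $\left\Vert \mathcal{T}(\widetilde{M})-\mathcal{T}(\widetilde{N})\right\Vert \le p'\left(\Vert\widetilde{M}\Vert+\Vert\widetilde{N}\Vert\right)\Vert\widetilde{M}-\widetilde{N}\Vert$ with $p'=4cS\max\{T,\frac{1}{c}(b_{1}-a_{1}),\frac{1}{c}(b_{2}-a_{2})\}$ and the observation $\frac{p'}{p}<\frac12$, two solutions bounded by $R_{+}$ coincide — and note the theorem only asserts uniqueness within that bounded class. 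Your stronger route via $\phi(t)=\sup_{[0,t]\times\Omega}|N|$ also degenerates at $pq=\frac14$: the constraint $p\phi^{2}-\phi+q\ge0$ is then satisfied by every $\phi$, so the continuity/branch argument yields no a priori bound and no uniqueness in $C$. Finally, the positivity device ``rewrite the loss term as damping and start the iteration at $0$'' requires that the iteration you actually run (with the damped operator, whose constants differ from those you estimated) converges to the solution; the paper does this carefully with the auxiliary operator $\mathcal{T}^{\sigma}$, $\sigma\ge2cS$, showing $Q_{i}^{\sigma}(|\widetilde{M}|)\ge0$ and concluding non-negativity of its fixed points. In short, the skeleton (mild formulation along characteristics, quadratic root $R_{+}$, singular planes \eqref{eq:ldoopz}) matches the paper, but the load-bearing step — existence up to and including $pq=\frac14$ — needs the compactness argument (or some substitute), not Banach on the ball you describe.
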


\section{Non-negativity of the solution\label{sec:Positivite}}

Let's consider the change of variables $\mathscr{F}:\left(t,x,y\right)\longmapsto\left(\eta_{1},\eta_{2},\eta_{3}\right)$
such that $\eta_{1}=x/c,$$\eta_{2}=t/2-x/2c+y/2c$ and $\eta_{3}=t/2-x/2c-y/2c.$
We have $\dfrac{D\left(\eta_{1},\eta_{2},\eta_{3}\right)}{D\left(t,x,y\right)}=\dfrac{1}{2c^{2}}\neq0$
and the inverse of $\mathscr{F}$ is defined by $\mathscr{F}^{-1}:\left(\eta_{1},\eta_{2},\eta_{3}\right)\longmapsto\left(t,x,y\right)$such
that $t=\eta_{1}+\eta_{2}+\eta_{3},$ $x=c\eta_{1}$ and $y=c\eta_{2}-c\eta_{3}.$The
transformed of the mixed problem $\Sigma^{0},$ through the change
of variables ,is the following problem $\Sigma^{1}:$ 
\begin{align}
\dfrac{\partial\widetilde{N_{1}}}{\partial\eta_{1}} & =Q(\widetilde{N}),\:\left(\eta_{1},\eta_{2},\eta_{3}\right)\in\mathring{\overbrace{\mathscr{F}\left(\mathscr{P}\right)}}\equiv\mathring{\mathscr{P}'}\label{eq:ssdffz}\\
\dfrac{\partial\widetilde{N_{2}}}{\partial\eta_{2}} & =-Q(\widetilde{N}),\:\left(\eta_{1},\eta_{2},\eta_{3}\right)\in\mathring{\mathscr{P}'}\\
\dfrac{\partial\widetilde{N_{3}}}{\partial\eta_{3}} & =-Q(\widetilde{N}),\:\left(\eta_{1},\eta_{2},\eta_{3}\right)\in\mathring{\mathscr{P}'}\\
-\dfrac{\partial\widetilde{N_{4}}}{\partial\eta_{1}}+\dfrac{\partial\widetilde{N_{4}}}{\partial\eta_{2}}+\dfrac{\partial\widetilde{N_{4}}}{\partial\eta_{3}} & =Q(\widetilde{N}),\:\left(\eta_{1},\eta_{2},\eta_{3}\right)\in\mathring{\mathscr{P}'}\label{eq:qqqqsa}
\end{align}
\begin{equation}
\widetilde{N_{1}}\left(-\eta_{2}-\eta_{3},\eta_{2},\eta_{3}\right)=N_{1}^{0}\left(-c\eta_{2}-c\eta_{3},c\eta_{2}-c\eta_{3}\right)\label{eq:dlz}
\end{equation}
\begin{equation}
\widetilde{N_{2}}\left(\eta_{1},-\eta_{1}-\eta_{3},\eta_{3}\right)=N_{2}^{0}\left(c\eta_{1},-c\eta_{1}-2c\eta_{3}\right)\label{eq:smqpo}
\end{equation}
\begin{equation}
\widetilde{N_{3}}\left(\eta_{1},\eta_{2},-\eta_{1}-\eta_{2}\right)=N_{3}^{0}\left(c\eta_{1},c\eta_{1}+2c\eta_{2}\right)\label{eq:ksio}
\end{equation}
\begin{equation}
\widetilde{N_{4}}\left(-\eta_{2}-\eta_{3},\eta_{2},\eta_{3}\right)=N_{4}^{0}\left(-c\eta_{2}-c\eta_{3},c\eta_{2}-c\eta_{3}\right)\label{eq:uiiq}
\end{equation}
\begin{equation}
\widetilde{N_{1}}\left({\displaystyle \frac{1}{c}}a_{1},\eta_{2},\eta_{3}\right)=N_{1}^{-}\left({\displaystyle \frac{1}{c}}a_{1}+\eta_{2}+\eta_{3},c\eta_{2}-c\eta_{3}\right)\label{eq:lsoo-1}
\end{equation}

\begin{equation}
\widetilde{N_{2}}\left(\eta_{1},\eta_{3}+\dfrac{a_{2}}{c},\eta_{3}\right)=N_{2}^{-}\left(\eta_{1}+2\eta_{3}+\dfrac{a_{2}}{c},c\eta_{1}\right)\label{eq:qmpoe}
\end{equation}
\begin{equation}
\widetilde{N_{3}}\left(\eta_{1},\eta_{2},\eta_{2}-\dfrac{b_{2}}{c}\right)=N_{3}^{+}\left(\eta_{1}+2\eta_{2}-\dfrac{b_{2}}{c},c\eta_{1}\right)\label{eq:oillq}
\end{equation}
\begin{equation}
\widetilde{N_{4}}\left({\displaystyle \frac{1}{c}}b_{1},\eta_{2},\eta_{3}\right)=N_{4}^{+}\left({\displaystyle \frac{1}{c}}b_{1}+\eta_{2}+\eta_{3},c\eta_{2}-c\eta_{3}\right).\label{eq:yui}
\end{equation}

\subsection{Non-negative operator. }
\begin{prop}
Let $\sigma>0.$ The problem (\ref{eq:qqqqsa})-(\ref{eq:yui}) is
equivalent to:
\begin{align}
\dfrac{\partial\widetilde{N_{1}}}{\partial\eta_{1}}+\sigma\rho\left(\widetilde{N}\right)\widetilde{N_{1}} & =\sigma\rho\left(\widetilde{N}\right)\widetilde{N_{1}}+Q\left(\widetilde{N}\right),\:\left(\eta_{1},\eta_{2},\eta_{3}\right)\in\mathring{\overbrace{\mathscr{F}\left(\mathscr{P}\right)}}\equiv\mathring{\mathscr{P}'}\label{eq:ssdffz-1}\\
\dfrac{\partial\widetilde{N_{2}}}{\partial\eta_{2}}+\sigma\rho\left(\widetilde{N}\right)\widetilde{N_{2}} & =\sigma\rho\left(\widetilde{N}\right)\widetilde{N_{2}}-Q\left(\widetilde{N}\right),\:\left(\eta_{1},\eta_{2},\eta_{3}\right)\in\mathring{\mathscr{P}'}\\
\dfrac{\partial\widetilde{N_{3}}}{\partial\eta_{3}}+\sigma\rho\left(\widetilde{N}\right)\widetilde{N_{3}} & =\sigma\rho\left(\widetilde{N}\right)\widetilde{N_{3}}-Q\left(\widetilde{N}\right),\:\left(\eta_{1},\eta_{2},\eta_{3}\right)\in\mathring{\mathscr{P}'}\\
-\dfrac{\partial\widetilde{N_{4}}}{\partial\eta_{1}}+\dfrac{\partial\widetilde{N_{4}}}{\partial\eta_{2}}+\dfrac{\partial\widetilde{N_{4}}}{\partial\eta_{3}}+\sigma\rho\left(\widetilde{N}\right)\widetilde{N_{4}} & =\sigma\rho\left(\widetilde{N}\right)\widetilde{N_{4}}+Q\left(\widetilde{N}\right),\:\left(\eta_{1},\eta_{2},\eta_{3}\right)\in\mathring{\mathscr{P}'}\label{eq:qqqqsa-1}
\end{align}
\end{prop}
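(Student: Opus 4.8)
The statement is an algebraic identity once the unknown is fixed, so the plan is essentially to write it out carefully and record that the operation is reversible.

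First I would fix $\sigma>0$ and an arbitrary quadruple $\widetilde{N}=(\widetilde{N_1},\widetilde{N_2},\widetilde{N_3},\widetilde{N_4})$ on $\mathscr{P}'$ with the regularity implicitly required by $\Sigma^1$ (continuous on $\mathscr{P}'$, admitting the relevant partial derivatives on $\mathring{\mathscr{P}'}$), and note that $\rho(\widetilde{N})=\widetilde{N_1}+\widetilde{N_2}+\widetilde{N_3}+\widetilde{N_4}$ is then a fixed continuous function on $\mathscr{P}'$. Consequently, for each $i$, the term $\sigma\rho(\widetilde{N})\widetilde{N_i}$ is a fixed function of $(\eta_1,\eta_2,\eta_3)$, and adding it to both sides of the $i$-th equation of $\Sigma^1$ — i.e. of (\ref{eq:ssdffz}), of the second and third transport equations, and of (\ref{eq:qqqqsa}) — yields precisely (\ref{eq:ssdffz-1})–(\ref{eq:qqqqsa-1}). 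This manipulation preserves the equality pointwise on $\mathring{\mathscr{P}'}$ and is undone by subtracting the same term; hence $\widetilde{N}$ satisfies the transport equations of $\Sigma^1$ on $\mathring{\mathscr{P}'}$ if and only if it satisfies (\ref{eq:ssdffz-1})–(\ref{eq:qqqqsa-1}) there.

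Second I would observe that the initial and boundary relations (\ref{eq:dlz})–(\ref{eq:yui}) appear verbatim in the augmented formulation; nothing is added to or removed from the boundary conditions. Putting the two observations together shows that the solution sets of the two problems coincide, which is the claim. I do not expect any genuine obstacle here: the one subtlety worth flagging explicitly is that the new zeroth-order term carries the \emph{same} unknown $\widetilde{N}$ on both sides, so this is an identity, not a genuine modification of the problem.

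The reason for performing the rewriting — and for introducing the free parameter $\sigma$ — is structural rather than logical, and I would make this explicit: the left-hand sides of (\ref{eq:ssdffz-1})–(\ref{eq:qqqqsa-1}) now contain the damping term $\sigma\rho(\widetilde{N})\widetilde{N_i}$, so that integrating each equation along its characteristic with the integrating factor $\exp\!\bigl(\sigma\!\int\rho(\widetilde{N})\bigr)$ produces a solution operator that is positivity-preserving and, for $\sigma$ chosen large relative to the data, contractive; this is exactly what the fixed-point argument in the next section will exploit. I would close by noting that each characteristic issuing from a point of $\mathscr{P}'$ reaches the boundary precisely at a point where one of (\ref{eq:dlz})–(\ref{eq:yui}) prescribes a value, so passing between the two formulations raises no compatibility question in either direction.
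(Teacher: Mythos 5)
Your argument is correct and coincides with the paper's own proof, which simply notes that the augmented equations are obtained by adding $\sigma\rho\left(\widetilde{N}\right)\widetilde{N_{i}}$, $i=1,\dots,4$, to both members of the original transport equations, the operation being trivially reversible. The extra remarks you include on the role of $\sigma$ are sound motivation but are not needed for the equivalence itself.
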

with the conditions (\ref{eq:dlz})-(\ref{eq:yui}) and $\rho\left(\widetilde{N}\right)=\sum_{i=1}^{4}\widetilde{N_{i}}.$
\begin{proof}
The proof is obvious so that the equations (\ref{eq:ssdffz-1})-(\ref{eq:qqqqsa-1})
are obtained by adding $\sigma\rho\left(\widetilde{N}\right)\widetilde{N_{i}},$$i=1,\cdots,4$
to the members of equations (\ref{eq:ssdffz})-(\ref{eq:qqqqsa}).
\end{proof}
In the sequel we denote $Q_{i}^{\sigma}\left(\widetilde{N}\right)=\sigma\rho\left(\widetilde{N}\right)\widetilde{N_{i}}+Q\left(\widetilde{N}\right),$$i=1,4$
and $Q_{i}^{\sigma}\left(\widetilde{N}\right)=\sigma\rho\left(\widetilde{N}\right)\widetilde{N_{i}}-Q\left(\widetilde{N}\right),$$i=2,3.$
\begin{prop}
:\label{prop::azeeza-1}

Let $\sigma>0.$ Let $\widetilde{M}=\left(\widetilde{M_{1}},\widetilde{M_{2}},\widetilde{M_{3}},\widetilde{M_{4}}\right)$
be a fixed 4-tuple of continuous functions defined from $\begin{array}{r}
\mathscr{P}'\end{array}$ to $\R$. Let's put $\left|\widetilde{M}\right|=\left(\left|\widetilde{M_{1}}\right|,\left|\widetilde{M_{2}}\right|,\left|\widetilde{M_{3}}\right|,\left|\widetilde{M_{4}}\right|\right)$.
Let's consider the decoupled system $\left(\Sigma_{\sigma,\widetilde{M}}^{1}\right)$
of the following equations : $\widetilde{N}=\left(\widetilde{N_{1}},\widetilde{N_{2}},\widetilde{N_{3}},\widetilde{N_{4}}\right),$
(\ref{eq:ssdffz-1-1})-(\ref{eq:qqqqsa-1-1}) 
\begin{align}
\dfrac{\partial\widetilde{N_{1}}}{\partial\eta_{1}}+\sigma\rho\left(\left|\widetilde{M}\right|\right)\widetilde{N_{1}} & =Q_{1}^{\sigma}\left(\left|\widetilde{M}\right|\right),\:\left(\eta_{1},\eta_{2},\eta_{3}\right)\in\mathring{\overbrace{\mathscr{F}\left(\mathscr{P}\right)}}\equiv\mathring{\mathscr{P}'}\label{eq:ssdffz-1-1}\\
\dfrac{\partial\widetilde{N_{2}}}{\partial\eta_{2}}+\sigma\rho\left(\left|\widetilde{M}\right|\right)\widetilde{N_{2}} & =Q_{2}^{\sigma}\left(\left|\widetilde{M}\right|\right),\:\left(\eta_{1},\eta_{2},\eta_{3}\right)\in\mathring{\mathscr{P}'}\label{eq:skiz}\\
\dfrac{\partial\widetilde{N_{3}}}{\partial\eta_{3}}+\sigma\rho\left(\left|\widetilde{M}\right|\right)\widetilde{N_{3}} & =Q_{3}^{\sigma}\left(\left|\widetilde{M}\right|\right),\:\left(\eta_{1},\eta_{2},\eta_{3}\right)\in\mathring{\mathscr{P}'}\label{eq:losik}\\
-\dfrac{\partial\widetilde{N_{4}}}{\partial\eta_{1}}+\dfrac{\partial\widetilde{N_{4}}}{\partial\eta_{2}}+\dfrac{\partial\widetilde{N_{4}}}{\partial\eta_{3}}+\sigma\rho\left(\left|\widetilde{M}\right|\right)\widetilde{N_{4}} & =Q_{4}^{\sigma}\left(\left|\widetilde{M}\right|\right),\:\left(\eta_{1},\eta_{2},\eta_{3}\right)\in\mathring{\mathscr{P}'}\label{eq:qqqqsa-1-1}
\end{align}
with the conditions (\ref{eq:dlz})-(\ref{eq:yui}). Then for sufficiently
large $\sigma$ , the problem $\left(\Sigma_{\sigma,\widetilde{M}}^{1}\right)$
has an unique continuous and non-negative solution. 
\end{prop}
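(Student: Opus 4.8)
The plan is to solve the decoupled system \eqref{eq:ssdffz-1-1}--\eqref{eq:qqqqsa-1-1} one scalar equation at a time: with the $4$-tuple $\widetilde{M}$ held fixed, each of the four equations is a \emph{linear} first–order transport equation in the single unknown $\widetilde{N_i}$, and such an equation is integrated explicitly along its characteristics by the integrating–factor method, producing a formula that is manifestly non-negative. First I would fix any $\sigma\ge 2cS$ (this is the meaning of ``sufficiently large''). For such $\sigma$ the coefficient $\sigma\rho(|\widetilde{M}|)=\sigma\sum_{j=1}^{4}|\widetilde{M_j}|$ is continuous and $\ge 0$ on $\mathscr{P}'$, and each source $Q_i^{\sigma}(|\widetilde{M}|)$ is continuous and $\ge 0$ too: for instance
\[
Q_1^{\sigma}\bigl(|\widetilde{M}|\bigr)=\sigma\rho\bigl(|\widetilde{M}|\bigr)|\widetilde{M_1}|+2cS\bigl(|\widetilde{M_2}|\,|\widetilde{M_3}|-|\widetilde{M_1}|\,|\widetilde{M_4}|\bigr)\ge(\sigma-2cS)\,|\widetilde{M_1}|\,|\widetilde{M_4}|+2cS\,|\widetilde{M_2}|\,|\widetilde{M_3}|\ge 0,
\]
using $\rho(|\widetilde{M}|)\ge|\widetilde{M_4}|$; the cases $i=4$ and $i=2,3$ are identical after exchanging the roles of the pairs $\{1,4\}$ and $\{2,3\}$. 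Moreover the data in \eqref{eq:dlz}--\eqref{eq:yui} are non-negative, being compositions of the non-negative functions $N_i^{0},N_1^{-},N_2^{-},N_3^{+},N_4^{+}$ with affine maps.

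Next I would carry out the integration along characteristics. Take $i=1$: equation \eqref{eq:ssdffz-1-1} is $\partial_{\eta_1}\widetilde{N_1}+a\,\widetilde{N_1}=f$ with $a=\sigma\rho(|\widetilde{M}|)\ge 0$ and $f=Q_1^{\sigma}(|\widetilde{M}|)\ge 0$, i.e.\ a linear ODE in $\eta_1$ along each line $\{\eta_2=\text{const},\ \eta_3=\text{const}\}$. Going backwards along such a line the $t$–coordinate strictly decreases, so the line leaves $\mathscr{P}'$ either through the image of $\{t=0\}$ or through the image of $\{x=a_1\}$, the two regimes being separated by the image of the plane $-ct+x=a_1$ of \eqref{eq:ldoopz}; on the first portion the Cauchy datum is \eqref{eq:dlz}, on the second it is \eqref{eq:lsoo-1}, and the compatibility relation \eqref{eq:kqiiq} says exactly that these two data coincide on the separating plane. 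If $P_0$ denotes the foot of the characteristic through $P$, the integrating factor yields
\[
\widetilde{N_1}(P)=\widetilde{N_1}(P_0)\exp\!\Bigl(-\!\int_{P_0}^{P}a\Bigr)+\int_{P_0}^{P}\exp\!\Bigl(-\!\int_{P'}^{P}a\Bigr)\,f(P')\,d\eta_1 ,
\]
which is $\ge 0$ since $\widetilde{N_1}(P_0)\ge 0$, $a\ge 0$, $f\ge 0$. The same argument applies verbatim to \eqref{eq:skiz} (integrate in $\eta_2$; data \eqref{eq:smqpo} and \eqref{eq:qmpoe}; separating plane $-ct+y=a_2$; compatibility \eqref{eq:qkkqo}) and to \eqref{eq:losik} (integrate in $\eta_3$; data \eqref{eq:ksio} and \eqref{eq:oillq}; separating plane $ct+y=b_2$; compatibility \eqref{eq:ssqz}). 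For \eqref{eq:qqqqsa-1-1} the operator $-\partial_{\eta_1}+\partial_{\eta_2}+\partial_{\eta_3}$ is the derivative along the direction $(-1,1,1)$ in the $\eta$–variables, equivalently the transport operator $\partial_t-c\partial_x$ along the lines $\{y=\text{const},\ ct+x=\text{const}\}$ in the original variables; the same integrating–factor computation along these characteristics, with data \eqref{eq:uiiq} and \eqref{eq:yui}, separating plane $ct+x=b_1$ and compatibility \eqref{eq:jqiiq}, produces a non-negative $\widetilde{N_4}$.

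Then I would settle continuity and uniqueness. The foot map $P\mapsto P_0$ is continuous on $\mathscr{P}'$ (at a point of a separating plane the foot lies on the common edge of the two boundary portions), the data are continuous on each portion and agree on that edge by the compatibility conditions, and the integrand is continuous; hence each $\widetilde{N_i}$ given by the formula above is continuous on $\mathscr{P}'$ and $\widetilde{N}=(\widetilde{N_1},\widetilde{N_2},\widetilde{N_3},\widetilde{N_4})$ is a continuous non-negative solution of $(\Sigma^1_{\sigma,\widetilde{M}})$. Uniqueness is immediate: if $\widetilde{N}$ and $\widetilde{N}{}'$ are two solutions, then $w_i=\widetilde{N_i}-\widetilde{N_i}{}'$ satisfies the same linear transport equation with zero source and zero Cauchy data, so the representation formula (with $f=0$ and $w_i(P_0)=0$) forces $w_i\equiv 0$; equivalently one may invoke Gronwall's lemma along the characteristics.

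The only genuinely delicate part is the geometry in the second step: for each of the four equations one must correctly identify the characteristic foliation of the box, determine which face of $\mathscr{P}'$ carries the datum along each characteristic (so that the Cauchy problem is neither under- nor over-determined), and verify that the compatibility conditions \eqref{eq:kqiiq}--\eqref{eq:jqiiq} are precisely what is needed for the piecewise-defined solution to be continuous across the planes \eqref{eq:ldoopz}. Everything else reduces to the elementary integrating-factor identity and the observation that it preserves non-negativity.
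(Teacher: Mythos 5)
Your proposal is correct and follows essentially the same route as the paper: solve each decoupled linear transport equation explicitly by the integrating-factor method along its characteristics, splitting according to whether the backward characteristic reaches the initial face or the relevant boundary face, and obtain non-negativity from the non-negative data together with the observation that $\sigma\geq 2cS$ makes every source $Q_i^{\sigma}\left(\left|\widetilde{M}\right|\right)$ non-negative. The only difference is that you spell out the continuity across the separating planes (via the compatibility conditions) and the uniqueness of the linear problem, points the paper's proof leaves implicit in its closed-form formulas with indicator functions.
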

\begin{proof}
:The problem (\ref{eq:ssdffz-1-1})-(\ref{eq:qqqqsa-1-1}) is a linear
problem associated with (\ref{eq:ssdffz-1})-(\ref{eq:qqqqsa-1}).
Using the conditions (\ref{eq:dlz})-(\ref{eq:yui}), it\textquoteright s
unique solution is:
\begin{multline}
\widetilde{N_{1}}\left(\eta_{1},\eta_{2},\eta_{3}\right)=\\
\left(\int_{-\eta_{2}-\eta_{3}}^{\eta_{1}}e^{\sigma\int_{-\eta_{2}-\eta_{3}}^{s}\rho\left(\left|\widetilde{M}\right|\right)\left(r,\eta_{2},\eta_{3}\right)dr}Q_{1}^{\sigma}\left(\left|\widetilde{M}\right|\right)\left(s,\eta_{2},\eta_{3}\right)ds+\overline{N_{1}^{0}}\left(\eta_{2},\eta_{3}\right)\right)\\
\times e^{-\sigma\int_{-\eta_{2}-\eta_{3}}^{\eta_{1}}\rho\left(\left|\widetilde{M}\right|\right)\left(s,\eta_{2},\eta_{3}\right)ds}\cdot\mathbb{I}_{-c\eta_{2}-c\eta_{3}\geq a_{1}}\left(\eta_{1},\eta_{2},\eta_{3}\right)+\\
\left(\int_{\frac{1}{c}a_{1}}^{\eta_{1}}e^{\sigma\int_{\frac{1}{c}a_{1}}^{s}\rho\left(\left|\widetilde{M}\right|\right)\left(r,\eta_{2},\eta_{3}\right)dr}Q_{1}^{\sigma}\left(\left|\widetilde{M}\right|\right)\left(s,\eta_{2},\eta_{3}\right)ds+\overline{N_{1}^{-}}\left(\eta_{2},\eta_{3}\right)\right)\\
\times e^{-\sigma\int_{\frac{1}{c}a_{1}}^{\eta_{1}}\rho\left(\left|\widetilde{M}\right|\right)\left(s,\eta_{2},\eta_{3}\right)ds}\cdot\mathbb{I}_{-c\eta_{2}-c\eta_{3}\leq a_{1}}\left(\eta_{1},\eta_{2},\eta_{3}\right)\label{aoalal}
\end{multline}
\begin{multline}
\widetilde{N_{2}}\left(\eta_{1},\eta_{2},\eta_{3}\right)=\\
\left(\int_{-\eta_{1}-\eta_{3}}^{\eta_{2}}e^{\sigma\int_{-\eta_{1}-\eta_{3}}^{s}\rho\left(\left|\widetilde{M}\right|\right)\left(\eta_{1},r,\eta_{3}\right)dr}Q_{2}^{\sigma}\left(\left|\widetilde{M}\right|\right)\left(\eta_{1},s,\eta_{3}\right)ds+\overline{N_{2}^{0}}\left(\eta_{1},\eta_{3}\right)\right)\\
\times e^{-\sigma\int_{-\eta_{1}-\eta_{3}}^{\eta_{2}}\rho\left(\left|\widetilde{M}\right|\right)\left(\eta_{1},s,\eta_{3}\right)}\cdot\mathbb{I}_{-c\eta_{1}-2c\eta_{3}\geq a_{2}}\left(\eta_{1},\eta_{2},\eta_{3}\right)+\\
\left({\displaystyle \int_{\eta_{3}+\frac{a_{2}}{c}}^{\eta_{2}}}e^{\sigma\int_{\eta_{3}+\frac{a_{2}}{c}}^{s}\rho\left(\left|\widetilde{M}\right|\right)\left(\eta_{1},r,\eta_{3}\right)dr}Q{}_{2}^{\sigma}\left(\left|\widetilde{M}\right|\right)\left(\eta_{1},s,\eta_{3}\right)ds+\overline{N_{2}^{-}}\left(\eta_{1},\eta_{3}\right)\right)\\
\times e^{-\sigma\int_{\eta_{3}+\frac{a_{2}}{c}}^{\eta_{2}}\rho\left(\left|\widetilde{M}\right|\right)\left(\eta_{1},s,\eta_{3}\right)ds}\cdot\mathbb{I}_{-c\eta_{1}-2c\eta_{3}\leq a_{2}}\left(\eta_{1},\eta_{2},\eta_{3}\right)\label{eq:ppmms}
\end{multline}
\begin{multline}
\widetilde{N_{3}}\left(\eta_{1},\eta_{2},\eta_{3}\right)=\\
\left({\displaystyle \int_{-\eta_{1}-\eta_{2}}^{\eta_{3}}}e^{\sigma\int_{-\eta_{1}-\eta_{2}}^{s}\rho\left(\left|\widetilde{M}\right|\right)\left(\eta_{1},\eta_{2},r\right)dr}Q_{3}^{\sigma}\left(\left|\widetilde{M}\right|\right)\left(\eta_{1},\eta_{2},s\right)ds+\overline{N_{3}^{0}}\left(\eta_{1},\eta_{2}\right)\right)\\
\times e^{-\sigma\int_{-\eta_{1}-\eta_{2}}^{\eta_{3}}\rho\left(\left|\widetilde{M}\right|\right)\left(\eta_{1},\eta_{2},s\right)ds}\cdot\mathbb{I}_{c\eta_{1}+2c\eta_{2}\leq b_{2}}\left(\eta_{1},\eta_{2},\eta_{3}\right)+\\
\left({\displaystyle \int_{\eta_{2}-\frac{b_{2}}{c}}^{\eta_{3}}}e^{\sigma\int_{\eta_{2}-\frac{b_{2}}{c}}^{s}\rho\left(\left|\widetilde{M}\right|\right)\left(\eta_{1},\eta_{2},r\right)dr}Q_{3}^{\sigma}\left(\left|\widetilde{M}\right|\right)\left(\eta_{1},\eta_{2},s\right)ds+\overline{N_{3}^{+}}\left(\eta_{1},\eta_{2}\right)\right)\\
\times e^{-\sigma{\displaystyle \int_{\eta_{2}-\frac{b_{2}}{c}}^{\eta_{3}}}\rho\left(\left|\widetilde{M}\right|\right)\left(\eta_{1},\eta_{2},s\right)ds}\cdot\mathbb{I}_{c\eta_{1}+2c\eta_{2}\geq b_{2}}\left(\eta_{1},\eta_{2},\eta_{3}\right)\label{eq:oslslo}
\end{multline}
\begin{multline}
\widetilde{N_{4}}\left(\eta_{1},\eta_{2},\eta_{3}\right)=\mathbb{I}_{2c\eta_{1}+c\eta_{2}+c\eta_{3}\leq b_{1}}\left(\eta_{1},\eta_{2},\eta_{3}\right)\\
\cdot\left[\int_{0}^{\eta_{1}+\eta_{2}+\eta_{3}}\left(e^{\sigma\int_{0}^{s}\rho\left(\left|\widetilde{M}\right|\right)\left(-r+2\eta_{1}+\eta_{2}+\eta_{3};r-\eta_{1}-\eta_{3};r-\eta_{1}-\eta_{2}\right)dr}\right)\right.\\
\times Q_{4}^{\sigma}\left(\left|\widetilde{M}\right|\right)\left(-s+2\eta_{1}+\eta_{2}+\eta_{3};s-\eta_{1}-\eta_{3};s-\eta_{1}-\eta_{2}\right)ds\\
\left.+\overline{N_{4}^{0}}\left(\eta_{1},\eta_{2},\eta_{3}\right)\right]e^{-\sigma\int_{0}^{\eta_{1}+\eta_{2}+\eta_{3}}\rho\left(\left|\widetilde{M}\right|\right)\left(-s+2\eta_{1}+\eta_{2}+\eta_{3};s-\eta_{1}-\eta_{3};s-\eta_{1}-\eta_{2}\right)ds}\\
+\mathbb{I}_{2c\eta_{1}+c\eta_{2}+c\eta_{3}\geq b_{1}}\left(\eta_{1},\eta_{2},\eta_{3}\right)\\
\cdot\left[\int_{0}^{\left(-\eta_{1}+\frac{1}{c}b_{1}\right)}\left(e^{\sigma\int_{0}^{s}\rho\left(\left|\widetilde{M}\right|\right)\left(-r+\frac{1}{c}b_{1};r+\eta_{1}+\eta_{2}-\frac{1}{c}b_{1};r+\eta_{1}+\eta_{3}-\frac{1}{c}b_{1}\right)dr}\right)\right.\\
\times Q_{4}^{\sigma}\left(\left|\widetilde{M}\right|\right)\left(-s+\frac{1}{c}b_{1},s+\eta_{1}+\eta_{2}-\dfrac{1}{c}b_{1},s+\eta_{1}+\eta_{3}-\dfrac{1}{c}b_{1}\right)ds\\
\left.+\overline{N_{4}^{+}}\left(\eta_{1},\eta_{2},\eta_{3}\right)\right]e^{-\sigma\int_{0}^{\left(-\eta_{1}+\frac{1}{c}b_{1}\right)}\rho\left(\left|\widetilde{M}\right|\right)\left(-s+\frac{1}{c}b_{1};s+\eta_{1}+\eta_{2}-\frac{1}{c}b_{1};s+\eta_{1}+\eta_{3}-\frac{1}{c}b_{1}\right)ds}\label{eq:ayyeiz-1}
\end{multline}
where 
\begin{equation}
\begin{cases}
\overline{N_{1}^{0}}\left(\eta_{2},\eta_{3}\right)\equiv N_{1}^{0}\left(-c\eta_{2}-c\eta_{3},c\eta_{2}-c\eta_{3}\right)\\
\text{and }\overline{N_{1}^{-}}\left(\eta_{2},\eta_{3}\right)\equiv N_{1}^{-}\left({\displaystyle \frac{1}{c}}a_{1}+\eta_{2}+\eta_{3},c\eta_{2}-c\eta_{3}\right)
\end{cases}\label{eq:iiskzi}
\end{equation}

\begin{equation}
\begin{cases}
\overline{N_{2}^{0}}\left(\eta_{1},\eta_{3}\right)\equiv N_{2}^{0}\left(c\eta_{1},-c\eta_{1}-2c\eta_{3}\right)\\
\text{ and }\overline{N_{2}^{-}}\left(\eta_{1},\eta_{3}\right)\equiv N_{2}^{-}\left(\eta_{1}+2\eta_{3}+\dfrac{a_{2}}{c},c\eta_{1}\right)
\end{cases}\label{eq:ikoik}
\end{equation}
\begin{equation}
\begin{cases}
\overline{N_{3}^{0}}\left(\eta_{1},\eta_{2}\right)\equiv N_{3}^{0}\left(c\eta_{1},c\eta_{1}+2c\eta_{2}\right)\\
\text{ and }\overline{N_{3}^{+}}\left(\eta_{1},\eta_{2}\right)\equiv N_{3}^{+}\left(\eta_{1}+2\eta_{2}-\dfrac{b_{2}}{c},c\eta_{1}\right)
\end{cases}\label{eq:olooki}
\end{equation}
\begin{equation}
\begin{cases}
\overline{N_{4}^{0}}\left(\eta_{1},\eta_{2},\eta_{3}\right)\equiv N_{4}^{0}\left(2c\eta_{1}+c\eta_{2}+c\eta_{3},c\eta_{2}-c\eta_{3}\right)\\
\text{ and }\overline{N_{4}^{+}}\left(\eta_{1},\eta_{2},\eta_{3}\right)\equiv N_{4}^{+}\left(2\eta_{1}+\eta_{2}+\eta_{3}-\dfrac{1}{c}b_{1},c\eta_{2}-c\eta_{3}\right).
\end{cases}\label{eq:lopolop}
\end{equation}

Let us show that if $\sigma$ is sufficiently large, for all $\widetilde{M},$
the solution $\widetilde{N}=\left(\widetilde{N_{1}},\widetilde{N_{2}},\widetilde{N_{3}},\widetilde{N_{4}}\right)$
de $\left(\Sigma_{\sigma,\widetilde{M}}^{1}\right)$ where $\widetilde{N_{1}},\widetilde{N_{2}},\widetilde{N_{3}},\widetilde{N_{4}}$
are defined by (\ref{aoalal})-(\ref{eq:ayyeiz-1}), is non-negative
.

As the data $N_{1}^{-},\,N_{1}^{0},$ $N_{2}^{0},$ $N_{2}^{-},$
$N_{3}^{0}$, $N_{3}^{+}$, $N_{4}^{0}$, $N_{4}^{+}$ are non-negative,
it is sufficient that $Q_{i}^{\sigma}\left(\left|\widetilde{M}\right|\right)\geq0$,
$i=1,2,3,4.$ One has:
\begin{equation}
\begin{cases}
Q_{1}^{\sigma}\left(\left|\widetilde{M}\right|\right)=\sigma\left(\left|\widetilde{M_{1}}\right|+\left|\widetilde{M_{2}}\right|+\left|\widetilde{M_{3}}\right|\right)\left|\widetilde{M_{1}}\right|+2cS\left|\widetilde{M_{2}}\right|\left|\widetilde{M_{3}}\right|\\
\qquad\qquad+\left(\sigma-2cS\right)\left|\widetilde{M_{1}}\right|\left|\widetilde{M_{4}}\right|\\
Q_{2}^{\sigma}\left(\left|\widetilde{M}\right|\right)=\sigma\left(\left|\widetilde{M_{1}}\right|+\left|\widetilde{M_{2}}\right|+\left|\widetilde{M_{4}}\right|\right)\left|\widetilde{M_{2}}\right|+2cS\left|\widetilde{M_{1}}\right|\left|\widetilde{M_{4}}\right|\\
\qquad\qquad+\left(\sigma-2cS\right)\left|\widetilde{M_{2}}\right|\left|\widetilde{M_{3}}\right|\\
Q_{3}^{\sigma}\left(\left|\widetilde{M}\right|\right)=\sigma\left(\left|\widetilde{M_{1}}\right|+\left|\widetilde{M_{3}}\right|+\left|\widetilde{M_{4}}\right|\right)\left|\widetilde{M_{3}}\right|+2cS\left|\widetilde{M_{1}}\right|\left|\widetilde{M_{4}}\right|\\
\qquad\qquad+\left(\sigma-2cS\right)\left|\widetilde{M_{2}}\right|\left|\widetilde{M_{3}}\right|\\
Q_{4}^{\sigma}\left(\left|\widetilde{M}\right|\right)=\sigma\left(\left|\widetilde{M_{2}}\right|+\left|\widetilde{M_{3}}\right|+\left|\widetilde{M_{4}}\right|\right)\left|\widetilde{M_{4}}\right|+2cS\left|\widetilde{M_{2}}\right|\left|\widetilde{M_{3}}\right|\\
\qquad\qquad+\left(\sigma-2cS\right)\left|\widetilde{M_{1}}\right|\left|\widetilde{M_{4}}\right|.
\end{cases}
\end{equation}
From which we conclude that for $\sigma\geq2cS,$ the solution $\widetilde{N}=\left(\widetilde{N_{1}},\widetilde{N_{2}},\widetilde{N_{3}},\widetilde{N_{4}}\right)$
of $\left(\Sigma_{\sigma,\widetilde{M}}^{1}\right)$ is non-negative.
\end{proof}
We can thus consider the non-negative operator 
\begin{align}
\mathcal{T}^{\sigma}:C\left(\begin{array}{r}
\mathscr{P}'\end{array};\R^{4}\right) & \longrightarrow C\left(\begin{array}{r}
\mathscr{P}'\end{array};\R^{4}\right)\nonumber \\
\widetilde{M} & \longmapsto\widetilde{N}_{\widetilde{M}},\label{eq:ksoo}
\end{align}
where $\widetilde{N}_{\widetilde{M}}$ is the unique non-negative
solution of the problem $\left(\Sigma_{\sigma,\widetilde{M}}^{1}\right)$
for sufficiently large $\sigma.$

\subsection{Non-negativity theorem. }
\begin{thm}
:\label{thm::etaatyeyaiioa-1-1}

The solutions of the problem $\Sigma^{1}$ (eq.\ref{eq:ssdffz}-\ref{eq:yui})
are non-negative . 
\end{thm}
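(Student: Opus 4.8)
The plan is to deduce the non-negativity of a solution of $\Sigma^1$ from two facts already available: the operator $\mathcal{T}^\sigma$ of (\ref{eq:ksoo}) takes values in the non-negative cone of $C(\mathscr{P}';\R^4)$ for $\sigma\geq 2cS$, and its fixed points are exactly the non-negative solutions of $\Sigma^1$. I would first establish this equivalence. If $\widetilde{N}\in C(\mathscr{P}';\R^4)$ is non-negative and solves $\Sigma^1$, then $|\widetilde{N}|=\widetilde{N}$, so $\rho(|\widetilde{N}|)=\rho(\widetilde{N})$ and $Q_i^\sigma(|\widetilde{N}|)=Q_i^\sigma(\widetilde{N})$; hence the relaxed system (\ref{eq:ssdffz-1})--(\ref{eq:qqqqsa-1}) satisfied by $\widetilde{N}$ (by the equivalence Proposition) is precisely the linear problem $(\Sigma^1_{\sigma,\widetilde{N}})$, and the uniqueness part of Proposition \ref{prop::azeeza-1} forces $\widetilde{N}=\mathcal{T}^\sigma(\widetilde{N})$. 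Conversely, a fixed point $\widetilde{N}=\mathcal{T}^\sigma(\widetilde{N})$ lies in the range of $\mathcal{T}^\sigma$, so $\widetilde{N}\geq 0$; then $|\widetilde{N}|=\widetilde{N}$ and the equations (\ref{eq:ssdffz-1-1})--(\ref{eq:qqqqsa-1-1}) defining $(\Sigma^1_{\sigma,\widetilde{N}})$ collapse to (\ref{eq:ssdffz-1})--(\ref{eq:qqqqsa-1}), which is $\Sigma^1$; so $\widetilde{N}$ is a non-negative solution of $\Sigma^1$.

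It then remains to pass from ``$\Sigma^1$ has a non-negative solution'' to ``every solution of $\Sigma^1$ is non-negative''. For this I would invoke the resolution of $\Sigma^0$ --- equivalently, via the change of variables $\mathscr{F}$, of $\Sigma^1$ --- carried out in Section \ref{sec:Resolution}: it produces a fixed point of $\mathcal{T}^\sigma$, which is non-negative by Proposition \ref{prop::azeeza-1} and is shown there to be the unique solution; hence every solution of $\Sigma^1$ coincides with it and is non-negative. An argument independent of Section \ref{sec:Resolution} is also possible, by controlling the negative parts directly. Writing $\widetilde{N}_i^-=\max\{-\widetilde{N}_i,0\}$ for a solution $\widetilde{N}$, and using along the characteristic of the $i$-th equation --- the straight segment appearing in the Duhamel formulas (\ref{aoalal})--(\ref{eq:ayyeiz-1}), along which $t=\eta_1+\eta_2+\eta_3$ is an increasing affine parameter --- the same sign analysis carried out for $Q_i^\sigma$, one gets a differential inequality $\partial_{\mathrm{char}}\widetilde{N}_i^-\leq 2cS\,\|\widetilde{N}\|_\infty\sum_{j=1}^4\widetilde{N}_j^-$, the loss terms being discarded. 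Every such characteristic enters $\mathscr{P}'$ on $\{t=0\}$, where $\widetilde{N}_i^-=0$ since $N_i^0\geq 0$, or on one of the faces carrying $N_1^-,N_2^-,N_3^+,N_4^+\geq 0$, where again $\widetilde{N}_i^-=0$; integrating and setting $\Phi(t)=\max_{1\leq i\leq4}\sup\{\widetilde{N}_i^-(\eta_1,\eta_2,\eta_3):\eta_1+\eta_2+\eta_3=t\}$ gives $\Phi(t)\leq C\int_0^t\Phi(\tau)\,d\tau$ with $\Phi(0)=0$ and $C$ a multiple of $cS\|\widetilde{N}\|_\infty$, whence $\Phi\equiv 0$ by Gronwall's lemma, i.e. $\widetilde{N}\geq 0$.

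The main obstacle, and the reason for the relaxation device of this section, is the absence of a one-step maximum principle: the four densities travel along four distinct characteristic families on $\mathscr{P}'$ and the collision term $Q$ changes sign, so positivity cannot be read off any single scalar equation. Adding $\sigma\rho(\widetilde{N})\widetilde{N}_i$ with $\sigma\geq 2cS$ turns each source $Q_i^\sigma(|\widetilde{N}|)$ into a sum of products of the $|\widetilde{N}_j|$ with non-negative coefficients, so the Duhamel formulas (\ref{aoalal})--(\ref{eq:ayyeiz-1}) plainly return non-negative functions; the price is that the statement must be coupled with solvability of the fixed-point equation, which is where the smallness assumption $pq\leq 1/4$ of Theorem \ref{thm:Suppose-.-Then} enters. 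In either route the only genuinely fiddly point is deciding, for each characteristic, which face of $\mathscr{P}'$ it meets --- exactly the alternative encoded by the indicators $\mathbb{I}_{\cdots}$ in (\ref{aoalal})--(\ref{eq:ayyeiz-1}).
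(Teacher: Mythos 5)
Your first paragraph is, in substance, the paper's entire proof of this theorem: the paper checks that a fixed point $\widetilde{M}$ of $\mathcal{T}^{\sigma}$ satisfies $\left|\widetilde{M}\right|=\widetilde{M}$ (the operator being non-negative) and therefore solves $\Sigma^{1}$, and it concludes that the fixed points of $\mathcal{T}^{\sigma}$ are non-negative solutions of $\Sigma^{1}$; it offers nothing further. So the step you isolate as remaining --- passing from this to ``every solution of $\Sigma^{1}$ is non-negative'' --- is exactly the point the paper's own argument does not address, and it cannot be obtained by merely rewriting the equations, since recognizing a given solution $\widetilde{N}$ of $\Sigma^{1}$ as a solution of $\left(\Sigma_{\sigma,\widetilde{N}}^{1}\right)$, hence as a fixed point of $\mathcal{T}^{\sigma}$, already uses $\left|\widetilde{N}\right|=\widetilde{N}$, i.e.\ the non-negativity one is trying to prove.

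Of your two proposed closures, route (a) fails as stated: Section \ref{sec:Resolution} constructs a fixed point of $\mathcal{T}$, not of $\mathcal{T}^{\sigma}$; its non-negativity is obtained in Proposition \ref{lsoos} by invoking precisely the present theorem, so the appeal is circular; and the uniqueness proved in the main theorem holds only among solutions satisfying the bound (\ref{mmspp}) and under the hypothesis $pq\leq\frac{1}{4}$, which the present statement does not assume, so it cannot exclude another, possibly sign-changing, solution. Route (b), by contrast, is correct and genuinely different from (indeed more complete than) the paper's argument: on $\left\{ \widetilde{N_{1}}<0\right\} $ one has $-Q\left(\widetilde{N}\right)\leq2cS\left(\widetilde{N_{1}}^{-}\widetilde{N_{4}}^{-}+\widetilde{N_{2}}^{+}\widetilde{N_{3}}^{-}+\widetilde{N_{2}}^{-}\widetilde{N_{3}}^{+}\right)\leq2cS\left\Vert \widetilde{N}\right\Vert \sum_{j=1}^{4}\widetilde{N_{j}}^{-}$, and similarly for the other components, so your characteristic-wise inequality holds for a continuous (hence bounded) solution; each backward characteristic exits $\mathscr{P}'$ either through $\left\{ t=0\right\} $ or through the face carrying the corresponding non-negative boundary datum (the alternative encoded by the indicators in (\ref{aoalal})--(\ref{eq:ayyeiz-1})), so the negative part vanishes at entry, and Gronwall applied to your $\Phi$ gives $\widetilde{N}\geq0$. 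This buys an unconditional positivity statement --- no relaxation parameter $\sigma$, no fixed-point machinery, no smallness condition --- at the modest technical cost of the Lipschitz-only regularity of $\widetilde{N_{i}}^{-}$, which you should handle via the integral form or Dini derivatives along each characteristic. Keep route (b) and the theorem as stated is fully proved; route (a) alone would not suffice.
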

\begin{proof}
:

Let us verify that $\widetilde{N}=\left(\widetilde{N_{1}},\widetilde{N_{2}},\widetilde{N_{3}},\widetilde{N_{4}}\right)$
is a solution of $\Sigma^{1}$ if $\widetilde{N}$ is a fixed point
of the operateur $\mathcal{T}^{\sigma}$ (\ref{eq:ksoo}) for sufficiently
large $\sigma$.

We have $\widetilde{M}=\left(\widetilde{M_{1}},\widetilde{M_{2}},\widetilde{M_{3}},\widetilde{M_{4}}\right)\in C\left(\begin{array}{r}
\mathscr{P}'\end{array};\R^{4}\right)$ is a fixed point of $\mathcal{T}^{\sigma}$ if $\widetilde{N}_{\widetilde{M}}=\widetilde{M}$,
i.e. $\widetilde{M}$ is a solution of $\left(\Sigma_{\sigma,\widetilde{M}}^{1}\right)$
i.e. 
\begin{align}
\dfrac{\partial\widetilde{M_{1}}}{\partial\eta_{1}}+\sigma\rho\left(\left|\widetilde{M}\right|\right)\widetilde{M_{1}} & =Q_{1}^{\sigma}\left(\left|\widetilde{M}\right|\right),\:\left(\eta_{1},\eta_{2},\eta_{3}\right)\in\mathring{\overbrace{\mathscr{F}\left(\mathscr{P}\right)}}\equiv\mathring{\mathscr{P}'}\label{eq:ssdffz-1-1-1-1-1}\\
\dfrac{\partial\widetilde{M_{2}}}{\partial\eta_{2}}+\sigma\rho\left(\left|\widetilde{M}\right|\right)\widetilde{M_{2}} & =Q_{2}^{\sigma}\left(\left|\widetilde{M}\right|\right),\:\left(\eta_{1},\eta_{2},\eta_{3}\right)\in\mathring{\mathscr{P}'}\\
\dfrac{\partial\widetilde{M_{3}}}{\partial\eta_{3}}+\sigma\rho\left(\left|\widetilde{M}\right|\right)\widetilde{M_{3}} & =Q_{3}^{\sigma}\left(\left|\widetilde{M}\right|\right),\:\left(\eta_{1},\eta_{2},\eta_{3}\right)\in\mathring{\mathscr{P}'}\\
-\dfrac{\partial\widetilde{M_{4}}}{\partial\eta_{1}}+\dfrac{\partial\widetilde{M_{4}}}{\partial\eta_{2}}+\dfrac{\partial\widetilde{M_{4}}}{\partial\eta_{3}}+\sigma\rho\left(\left|\widetilde{M}\right|\right)\widetilde{M_{4}} & =Q_{4}^{\sigma}\left(\left|\widetilde{M}\right|\right),\:\left(\eta_{1},\eta_{2},\eta_{3}\right)\in\mathring{\mathscr{P}'}\label{eq:qqqqsa-1-1-1-1-1}
\end{align}
with the conditions (\ref{eq:dlz})-(\ref{eq:yui}); as $\widetilde{N}_{\widetilde{M}}=\widetilde{M}$
is positive i.e. $\left|\widetilde{M}\right|=\widetilde{M}$ for sufficiently
large $\sigma$, (\ref{eq:ssdffz-1-1-1-1-1})-(\ref{eq:qqqqsa-1-1-1-1-1})
means $\widetilde{M}$ is a solution of $\left(\Sigma_{\sigma}^{1}\right)$
which is equivalent to $\Sigma^{1}.$ As $\mathcal{T}^{\sigma}$ is
non-negative, so are its fixed points.
\end{proof}

\section{Existence and uniqueness of bounded solution\label{sec:Resolution}}

We shall define an operator, the fixed points of which, are the solutions
of the problem $\Sigma^{1}$ (eq.\ref{eq:ssdffz}-\ref{eq:yui}) and
establish the existence of the fixed points by using the following
Schauder's theorem ( \cite{17}, p.25, Theorem 4.1.1). 
\begin{thm}
\label{sccssq}( Schauder\cite{17} ) Let $\mathcal{M}$ be a non-empty
convex subset of a normed space $\mathscr{B}.$ Let $\mathcal{T}$
be a continuous compact mapping from $\mathcal{M}$ into $\mathcal{M}$
. Then $\mathcal{T}$ has a fixed point. 
\end{thm}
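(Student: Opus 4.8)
The plan is to reduce this infinite-dimensional statement to the finite-dimensional Brouwer fixed point theorem, which I take as known (every continuous self-map of a compact convex subset of $\R^{n}$ has a fixed point); the bridge is a finite-dimensional approximation of $\mathcal{T}$ furnished by the relative compactness of its image. I read the hypothesis ``$\mathcal{T}$ compact from $\mathcal{M}$ into $\mathcal{M}$'' as: $\mathcal{T}$ is continuous and $K\equiv\overline{\mathcal{T}\left(\mathcal{M}\right)}$ is a compact subset of $\mathcal{M}$. Since $\mathcal{T}\left(\mathcal{M}\right)\subseteq\mathcal{M}$ and $\mathcal{M}$ is convex, the convex hull of any finite subset of $\mathcal{T}\left(\mathcal{M}\right)$ again lies in $\mathcal{M}$, a fact I will use repeatedly.

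First I would construct, for each $\varepsilon>0$, the \emph{Schauder projection}. By compactness of $K$, pick a finite $\varepsilon$-net $\left\{ y_{1},\dots,y_{n}\right\} \subseteq\mathcal{T}\left(\mathcal{M}\right)$, set $m_{i}\left(y\right)=\max\left\{ 0,\varepsilon-\left\Vert y-y_{i}\right\Vert \right\}$, and define
\[
P_{\varepsilon}\left(y\right)=\frac{\sum_{i=1}^{n}m_{i}\left(y\right)y_{i}}{\sum_{i=1}^{n}m_{i}\left(y\right)},\qquad y\in K.
\]
Because the $y_{i}$ form an $\varepsilon$-net the denominator never vanishes on $K$, so $P_{\varepsilon}$ is continuous; moreover $P_{\varepsilon}\left(y\right)$ is a convex combination of the $y_{i}$ lying within distance $\varepsilon$ of $y$, whence $\left\Vert P_{\varepsilon}\left(y\right)-y\right\Vert \leq\varepsilon$ for all $y\in K$. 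Its range lies in $\mathcal{M}_{\varepsilon}\equiv\mathrm{conv}\left\{ y_{1},\dots,y_{n}\right\} $, a compact convex subset of a finite-dimensional subspace of $\mathscr{B}$, and $\mathcal{M}_{\varepsilon}\subseteq\mathcal{M}$.

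Next I would apply Brouwer. The composition $P_{\varepsilon}\circ\mathcal{T}$ restricted to $\mathcal{M}_{\varepsilon}$ maps the finite-dimensional compact convex set $\mathcal{M}_{\varepsilon}$ continuously into itself (indeed $\mathcal{T}$ sends $\mathcal{M}_{\varepsilon}\subseteq\mathcal{M}$ into $K$, and $P_{\varepsilon}$ sends $K$ into $\mathcal{M}_{\varepsilon}$), so Brouwer's theorem yields $x_{\varepsilon}\in\mathcal{M}_{\varepsilon}$ with $P_{\varepsilon}\left(\mathcal{T}\left(x_{\varepsilon}\right)\right)=x_{\varepsilon}$. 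The estimate above then gives
\[
\left\Vert x_{\varepsilon}-\mathcal{T}\left(x_{\varepsilon}\right)\right\Vert =\left\Vert P_{\varepsilon}\left(\mathcal{T}\left(x_{\varepsilon}\right)\right)-\mathcal{T}\left(x_{\varepsilon}\right)\right\Vert \leq\varepsilon,
\]
so $x_{\varepsilon}$ is an approximate fixed point. Finally I would pass to the limit: taking $\varepsilon=1/k$, the points $\mathcal{T}\left(x_{1/k}\right)$ all lie in the relatively compact set $\mathcal{T}\left(\mathcal{M}\right)$, so along a subsequence $\mathcal{T}\left(x_{1/k_{j}}\right)\to z\in K\subseteq\mathcal{M}$; the approximate-fixed-point estimate forces $x_{1/k_{j}}\to z$ as well, and continuity of $\mathcal{T}$ gives $\mathcal{T}\left(x_{1/k_{j}}\right)\to\mathcal{T}\left(z\right)$, whence $z=\mathcal{T}\left(z\right)$.

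The main obstacle is the construction and analysis of $P_{\varepsilon}$: establishing the uniform estimate $\left\Vert P_{\varepsilon}\left(y\right)-y\right\Vert \leq\varepsilon$ together with continuity and the finite-dimensionality of its range is precisely what converts compactness into an applicable Brouwer setting. A secondary subtlety is ensuring the limit $z$ lies in the domain of $\mathcal{T}$, which is exactly why I read the compactness hypothesis as $\overline{\mathcal{T}\left(\mathcal{M}\right)}\subseteq\mathcal{M}$; under that reading $z\in K\subseteq\mathcal{M}$ automatically, and no closedness of $\mathcal{M}$ need be assumed separately.
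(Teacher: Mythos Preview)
The paper does not actually prove this theorem; it is quoted verbatim from Smart \cite{17} (Theorem~4.1.1 there) and used as a black box in the subsequent fixed-point argument. Your proposal supplies the classical proof via Schauder projections and Brouwer, and it is correct---including your careful reading of ``compact mapping from $\mathcal{M}$ into $\mathcal{M}$'' as ``$\overline{\mathcal{T}(\mathcal{M})}$ is a compact subset of $\mathcal{M}$,'' which is exactly Smart's hypothesis and is what makes the limit point $z$ land back in the domain without assuming $\mathcal{M}$ closed.
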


\subsection{Fixed point problem\label{subsec:Fixed-point-problem}}

Let $\widetilde{M}=\left(\widetilde{M_{1}},\widetilde{M_{2}},\widetilde{M_{3}},\widetilde{M_{4}}\right)$
be a fixed 4-tuple of continuous functions from $\begin{array}{r}
\mathscr{P}'\end{array}$ into $\R$. Let us consider, the following decoupled system $\left(\Sigma_{\widetilde{M}}^{1}\right)$
defined by (\ref{eq:ssdffz-1-1-1})-(\ref{eq:qqqqsa-1-1-1}) 
\begin{align}
\dfrac{\partial\widetilde{N_{1}}}{\partial\eta_{1}} & =Q(\widetilde{M}),\:\left(\eta_{1},\eta_{2},\eta_{3}\right)\in\mathring{\overbrace{\mathscr{F}\left(\mathscr{P}\right)}}\equiv\mathring{\mathscr{P}'}\label{eq:ssdffz-1-1-1}\\
\dfrac{\partial\widetilde{N_{2}}}{\partial\eta_{2}} & =-Q(\widetilde{M}),\:\left(\eta_{1},\eta_{2},\eta_{3}\right)\in\mathring{\mathscr{P}'}\\
\dfrac{\partial\widetilde{N_{3}}}{\partial\eta_{3}} & =-Q(\widetilde{M}),\:\left(\eta_{1},\eta_{2},\eta_{3}\right)\in\mathring{\mathscr{P}'}\\
-\dfrac{\partial\widetilde{N_{4}}}{\partial\eta_{1}}+\dfrac{\partial\widetilde{N_{4}}}{\partial\eta_{2}}+\dfrac{\partial\widetilde{N_{4}}}{\partial\eta_{3}} & =Q(\widetilde{M}),\:\left(\eta_{1},\eta_{2},\eta_{3}\right)\in\mathring{\mathscr{P}'}\label{eq:qqqqsa-1-1-1}
\end{align}
with the conditions (\ref{eq:dlz})-(\ref{eq:yui}). Then it follows
from the resolution made in the proof of proposition (\ref{prop::azeeza-1})
that the problem $\left(\Sigma_{\widetilde{M}}^{1}\right)$ has an
unique continuous solution $\widetilde{N}=\left(\widetilde{N_{1}},\widetilde{N_{2}},\widetilde{N_{3}},\widetilde{N_{4}}\right)$
defined by:

\begin{multline}
\widetilde{N_{1}}\left(\eta_{1},\eta_{2},\eta_{3}\right)\\
=\left(\int_{-\eta_{2}-\eta_{3}}^{\eta_{1}}Q\left(\widetilde{M}\right)\left(s,\eta_{2},\eta_{3}\right)ds+\overline{N_{1}^{0}}\left(\eta_{2},\eta_{3}\right)\right)\cdot\mathbb{I}_{-c\eta_{2}-c\eta_{3}\geq a_{1}}\left(\eta_{1},\eta_{2},\eta_{3}\right)\\
+\left(\int_{\frac{1}{c}a_{1}}^{\eta_{1}}Q\left(\widetilde{M}\right)\left(s,\eta_{2},\eta_{3}\right)ds+\overline{N_{1}^{-}}\left(\eta_{2},\eta_{3}\right)\right)\cdot\mathbb{I}_{-c\eta_{2}-c\eta_{3}\leq a_{1}}\left(\eta_{1},\eta_{2},\eta_{3}\right)\label{aoalal-2-1}
\end{multline}
\begin{multline}
\widetilde{N_{2}}\left(\eta_{1},\eta_{2},\eta_{3}\right)\\
=\left(\int_{-\eta_{1}-\eta_{3}}^{\eta_{2}}-Q\left(\widetilde{M}\right)\left(\eta_{1},s,\eta_{3}\right)ds+\overline{N_{2}^{0}}\left(\eta_{1},\eta_{3}\right)\right)\cdot\mathbb{I}_{-c\eta_{1}-2c\eta_{3}\geq a_{2}}\left(\eta_{1},\eta_{2},\eta_{3}\right)\\
+\left({\displaystyle \int_{\eta_{3}+\frac{a_{2}}{c}}^{\eta_{2}}}-Q\left(\widetilde{M}\right)\left(\eta_{1},s,\eta_{3}\right)ds+\overline{N_{2}^{-}}\left(\eta_{1},\eta_{3}\right)\right)\cdot\mathbb{I}_{-c\eta_{1}-2c\eta_{3}\leq a_{2}}\left(\eta_{1},\eta_{2},\eta_{3}\right)\label{eq:ppmms-2-1}
\end{multline}
\begin{multline}
\widetilde{N_{3}}\left(\eta_{1},\eta_{2},\eta_{3}\right)\\
=\left({\displaystyle \int_{-\eta_{1}-\eta_{2}}^{\eta_{3}}}-Q\left(\widetilde{M}\right)\left(\eta_{1},\eta_{2},s\right)ds+\overline{N_{3}^{0}}\left(\eta_{1},\eta_{2}\right)\right)\cdot\mathbb{I}_{c\eta_{1}+2c\eta_{2}\leq b_{2}}\left(\eta_{1},\eta_{2},\eta_{3}\right)\\
+\left({\displaystyle \int_{\eta_{2}-\frac{b_{2}}{c}}^{\eta_{3}}}-Q\left(\widetilde{M}\right)\left(\eta_{1},\eta_{2},s\right)ds+\overline{N_{3}^{+}}\left(\eta_{1},\eta_{2}\right)\right)\cdot\mathbb{I}_{c\eta_{1}+2c\eta_{2}\geq b_{2}}\left(\eta_{1},\eta_{2},\eta_{3}\right)\label{eq:oslslo-2-1}
\end{multline}
\begin{multline}
\widetilde{N_{4}}\left(\eta_{1},\eta_{2},\eta_{3}\right)=\mathbb{I}_{2c\eta_{1}+c\eta_{2}+c\eta_{3}\leq b_{1}}\left(\eta_{1},\eta_{2},\eta_{3}\right)\\
\cdot\left[\int_{0}^{\eta_{1}+\eta_{2}+\eta_{3}}\right.Q\left(\widetilde{M}\right)\left(-s+2\eta_{1}+\eta_{2}+\eta_{3};s-\eta_{1}-\eta_{3};s-\eta_{1}-\eta_{2}\right)ds+\\
\left.\overline{N_{4}^{0}}\left(\eta_{1},\eta_{2},\eta_{3}\right)\right]+\mathbb{I}_{2c\eta_{1}+c\eta_{2}+c\eta_{3}\geq b_{1}}\left(\eta_{1},\eta_{2},\eta_{3}\right)\\
\cdot\left[\int_{0}^{\left(-\eta_{1}+\frac{1}{c}b_{1}\right)}\right.Q\left(\widetilde{M}\right)\left(-s+\frac{1}{c}b_{1},s+\eta_{1}+\eta_{2}-\dfrac{1}{c}b_{1},s+\eta_{1}+\eta_{3}-\dfrac{1}{c}b_{1}\right)ds\\
\left.+\overline{N_{4}^{+}}\left(\eta_{1},\eta_{2},\eta_{3}\right)\right].\label{eq:ayyeiz-1-1-2-1}
\end{multline}

We can thus define the following operator 
\begin{align}
\mathcal{T}:C\left(\begin{array}{r}
\mathscr{P}'\end{array};\R^{4}\right) & \longrightarrow C\left(\begin{array}{r}
\mathscr{P}'\end{array};\R^{4}\right)\nonumber \\
\widetilde{M} & \longmapsto\mathcal{T}\left(\widetilde{M}\right)=\left(\mathcal{T}_{i}\left(\widetilde{M}\right)\right)_{i=1}^{4}\label{eq:kozep}
\end{align}
where $\mathcal{T}\left(\widetilde{M}\right)=\left(\mathcal{T}_{i}\left(\widetilde{M}\right)\right)_{i=1}^{4}$
is the unique solution of the problem $\left(\Sigma_{\widetilde{M}}^{1}\right).$

It immediatly follows from (\ref{eq:ssdffz-1-1-1})-(\ref{eq:qqqqsa-1-1-1})
and (\ref{eq:ssdffz})-(\ref{eq:yui}) that the solutions of $\Sigma^{1}$
are the fixed points of $\mathcal{T}.$

\subsection{Continuity of the operator of the fixed point problem}
\begin{prop}
\label{prop::mppp}The operator $\mathcal{\mathcal{T}}$ ( \ref{eq:kozep})
is continous. 
\end{prop}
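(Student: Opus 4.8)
The plan is to show that $\mathcal{T}$ is locally Lipschitz on $C\!\left(\mathscr{P}';\R^{4}\right)$ equipped with the norm $\Vert\cdot\Vert$, which a fortiori gives continuity. So fix $\widetilde{M}^{0}\in C\!\left(\mathscr{P}';\R^{4}\right)$, let $\widetilde{M}$ be arbitrary with $\Vert\widetilde{M}-\widetilde{M}^{0}\Vert\le 1$, and set $R=\Vert\widetilde{M}^{0}\Vert+1$, so that $\Vert\widetilde{M}\Vert\le R$. The whole argument rests on the structure of the representation formulas (\ref{aoalal-2-1})--(\ref{eq:ayyeiz-1-1-2-1}): every term there is either one of the data functions $\overline{N_{i}^{0}},\overline{N_{i}^{\pm}}$, which do not depend on $\widetilde{M}$, or an integral of $\pm Q(\widetilde{M})$ along a characteristic; and the indicators $\mathbb{I}_{\cdots}$ depend only on $(\eta_{1},\eta_{2},\eta_{3})$, not on $\widetilde{M}$. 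Hence at each point of $\mathscr{P}'$ the difference $\mathcal{T}_{i}(\widetilde{M})-\mathcal{T}_{i}(\widetilde{M}^{0})$ is a sum of at most two integrals of $Q(\widetilde{M})-Q(\widetilde{M}^{0})$ along characteristics.

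The first step is the elementary bilinearity estimate for the collision term. Using $Q(\widetilde{N})=2cS\!\left(\widetilde{N_{2}}\widetilde{N_{3}}-\widetilde{N_{1}}\widetilde{N_{4}}\right)$ and writing, e.g., $\widetilde{M_{2}}\widetilde{M_{3}}-\widetilde{M_{2}^{0}}\widetilde{M_{3}^{0}}=\widetilde{M_{2}}\bigl(\widetilde{M_{3}}-\widetilde{M_{3}^{0}}\bigr)+\bigl(\widetilde{M_{2}}-\widetilde{M_{2}^{0}}\bigr)\widetilde{M_{3}^{0}}$ (and likewise for $\widetilde{M_{1}}\widetilde{M_{4}}$), one obtains
\[
\Vert Q(\widetilde{M})-Q(\widetilde{M}^{0})\Vert_{\infty}\ \le\ 8\,cS\,R\,\Vert\widetilde{M}-\widetilde{M}^{0}\Vert .
\]
The second step is to integrate this pointwise bound. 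Since $\mathscr{P}'=\mathscr{F}(\mathscr{P})$ is compact (the continuous image of the compact box $\mathscr{P}$) and $\mathscr{F}$ is affine, each limit of integration occurring in (\ref{aoalal-2-1})--(\ref{eq:ayyeiz-1-1-2-1}) is an affine function of $(\eta_{1},\eta_{2},\eta_{3})$, and a direct check shows that on $\mathscr{P}'$ every one of these integration intervals has length at most $\ell:=\max\{4T,\ \tfrac{2}{c}(b_{1}-a_{1}),\ \tfrac{1}{c}(b_{2}-a_{2})\}$, the very quantity entering the parameter $p$. Combining the two steps,
\[
\Vert\mathcal{T}(\widetilde{M})-\mathcal{T}(\widetilde{M}^{0})\Vert\ \le\ 2\ell\,\Vert Q(\widetilde{M})-Q(\widetilde{M}^{0})\Vert_{\infty}\ \le\ 16\,cS\,R\,\ell\,\Vert\widetilde{M}-\widetilde{M}^{0}\Vert ,
\]
so $\mathcal{T}$ is Lipschitz, with constant $16cSR\ell$, on the closed unit ball around $\widetilde{M}^{0}$; as $\widetilde{M}^{0}$ was arbitrary, $\mathcal{T}$ is continuous.

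I do not expect a genuine obstacle here: $\mathcal{T}$ is a bounded quadratic nonlinearity (through $Q$) composed with bounded linear integral operators, so local Lipschitz continuity is automatic. The only points requiring care are bookkeeping ones — verifying formula by formula that each of the (at most two) integration intervals in (\ref{aoalal-2-1})--(\ref{eq:ayyeiz-1-1-2-1}) has length $\le\ell$ on $\mathscr{P}'$, and noting that the indicator functions carry no dependence on $\widetilde{M}$ so they pass unchanged to the difference — both of which follow immediately from the explicit form of $\mathscr{F}$ together with the compatibility conditions (\ref{eq:kqiiq})--(\ref{eq:jqiiq}) that glue the piecewise formulas across the dividing planes (\ref{eq:ldoopz}).
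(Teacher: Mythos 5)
Your argument is correct and follows essentially the same route as the paper: bound $\bigl\Vert Q(\widetilde{M})-Q(\widetilde{N})\bigr\Vert_{\infty}$ by bilinear splitting, bound the lengths of the characteristic integration intervals in (\ref{aoalal-2-1})--(\ref{eq:ayyeiz-1-1-2-1}) using the geometry of $\mathscr{P}'$, and conclude a local Lipschitz estimate of the form $C\left(\left\Vert \widetilde{M}\right\Vert +\left\Vert \widetilde{N}\right\Vert \right)\left\Vert \widetilde{M}-\widetilde{N}\right\Vert$, which is exactly the paper's (\ref{eq:lsoopa-2}) up to your coarser constant ($2\ell$ and the factor $8cSR$ instead of the sharper $p'=4cS\max\{T,\frac{1}{c}(b_{1}-a_{1}),\frac{1}{c}(b_{2}-a_{2})\}$). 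The looser constant is harmless for continuity itself; just note that the paper's tighter $p'$ is used again later in the uniqueness part of the main theorem, so it could not be replaced there by your $2\ell$ bound.
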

\begin{proof}
:Relation \ref{aoalal-2-1} gives for $\widetilde{M},\widetilde{N}\in C\left(\mathscr{P}',\R^{4}\right):$
\begin{multline}
\left\Vert \mathcal{T}_{1}\left(\widetilde{M}\right)-\mathcal{T}_{1}\left(\widetilde{N}\right)\right\Vert _{\infty}\leq\max\Biggl\{\sup_{\left(\eta_{1},\eta_{2},\eta_{3}\right)\in\mathscr{P}'}\Biggl|\int_{-\eta_{2}-\eta_{3}}^{\eta_{1}}\left[Q\left(\widetilde{M}\right)-Q\left(\widetilde{N}\right)\right]\left(s,\eta_{2},\eta_{3}\right)ds\Biggr|;\\
\sup_{\left(\eta_{1},\eta_{2},\eta_{3}\right)\in\mathscr{P}'}\Biggl|\int_{\frac{1}{c}a_{1}}^{\eta_{1}}\left[Q\left(\widetilde{M}\right)-Q\left(\widetilde{N}\right)\right]\left(s,\eta_{2},\eta_{3}\right)ds\Biggr|\Biggl\}.\label{aoalal-2-1-1-1-1-1-1}
\end{multline}
But \ref{eq:solls} yields 
\begin{multline}
Q\left(\widetilde{M}\right)-Q\left(\widetilde{N}\right)=2cS\left(\widetilde{M_{2}}-\widetilde{N_{2}}\right)\widetilde{M_{3}}+2cS\widetilde{N_{2}}\left(\widetilde{M_{3}}-\widetilde{N_{3}}\right)\\
-2cS\left(\widetilde{M_{1}}-\widetilde{N_{1}}\right)\widetilde{M_{4}}-2cS\widetilde{N_{1}}\left(\widetilde{M_{4}}-\widetilde{N_{4}}\right),
\end{multline}
hence 
\begin{multline*}
\left\Vert Q\left(\widetilde{M}\right)-Q\left(\widetilde{N}\right)\right\Vert _{\infty}\\
\leq2cS\left\Vert \widetilde{M_{2}}-\widetilde{N_{2}}\right\Vert _{\infty}\left\Vert \widetilde{M_{3}}\right\Vert _{\infty}+2cS\left\Vert \widetilde{N_{2}}\right\Vert _{\infty}\left\Vert \widetilde{M_{3}}-\widetilde{N_{3}}\right\Vert _{\infty}\\
+2cS\left\Vert \widetilde{M_{1}}-\widetilde{N_{1}}\right\Vert _{\infty}\left\Vert \widetilde{M_{4}}\right\Vert _{\infty}+2cS\left\Vert \widetilde{N_{1}}\right\Vert _{\infty}\left\Vert \widetilde{M_{4}}-\widetilde{N_{4}}\right\Vert _{\infty},
\end{multline*}
and 
\begin{multline*}
\left\Vert Q\left(\widetilde{M}\right)-Q\left(\widetilde{N}\right)\right\Vert _{\infty}\leq4cS\left\Vert \widetilde{M}-\widetilde{N}\right\Vert \left\Vert \widetilde{M}\right\Vert +4cS\left\Vert \widetilde{N}\right\Vert \left\Vert \widetilde{M}-\widetilde{N}\right\Vert \\
\left\Vert Q\left(\widetilde{M}\right)-Q\left(\widetilde{N}\right)\right\Vert _{\infty}\leq4cS\left(\left\Vert \widetilde{M}\right\Vert +\left\Vert \widetilde{N}\right\Vert \right)\left\Vert \widetilde{M}-\widetilde{N}\right\Vert ;
\end{multline*}

hence \ref{aoalal-2-1-1-1-1-1-1} implies 
\begin{multline}
\left\Vert \mathcal{T}_{1}\left(\widetilde{M}\right)-\mathcal{T}_{1}\left(\widetilde{N}\right)\right\Vert _{\infty}\leq\\
\max\Biggl\{\left(\sup_{\left(\eta_{1},\eta_{2},\eta_{3}\right)\in\mathscr{P}'}\left(\eta_{1}+\eta_{2}+\eta_{3}\right)\right)\cdot4cS\left(\left\Vert \widetilde{M}\right\Vert +\left\Vert \widetilde{N}\right\Vert \right)\left\Vert \widetilde{M}-\widetilde{N}\right\Vert ;\\
\left(\sup_{\left(\eta_{1},\eta_{2},\eta_{3}\right)\in\mathscr{P}'}\left(\eta_{1}-\frac{1}{c}a_{1}\right)\right)\cdot4cS\left(\left\Vert \widetilde{M}\right\Vert +\left\Vert \widetilde{N}\right\Vert \right)\left\Vert \widetilde{M}-\widetilde{N}\right\Vert \Biggl\}.\label{aoalal-2-1-1-1-1-1-1-1-3}
\end{multline}
Now $0\leq\eta_{1}+\eta_{2}+\eta_{3}\leq T$ and $a_{1}\leq c\eta_{1}\leq b_{1};$
hence $0\leq\eta_{1}-{\displaystyle \frac{1}{c}}a_{1}\leq{\displaystyle \frac{1}{c}}\left(b_{1}-a_{1}\right);$
from which 
\begin{multline}
\left\Vert \mathcal{T}_{1}\left(\widetilde{M}\right)-\mathcal{T}_{1}\left(\widetilde{N}\right)\right\Vert _{\infty}\\
\leq\max\Biggl\{ T,{\displaystyle \frac{1}{c}}\left(b_{1}-a_{1}\right)\Biggl\}\cdot4cS\left(\left\Vert \widetilde{M}\right\Vert +\left\Vert \widetilde{N}\right\Vert \right)\left\Vert \widetilde{M}-\widetilde{N}\right\Vert .\label{aoalal-2-1-1-1-1-1-1-1-1-2}
\end{multline}
 Similarly from \ref{eq:ppmms-2-1} we have 
\begin{multline}
\left\Vert \mathcal{T}_{2}\left(\widetilde{M}\right)-\mathcal{T}_{2}\left(\widetilde{N}\right)\right\Vert _{\infty}\leq\max\Biggl\{\sup_{\left(\eta_{1},\eta_{2},\eta_{3}\right)\in\mathscr{P}'}\Biggl|\int_{-\eta_{2}-\eta_{3}}^{\eta_{1}}\left[Q\left(\widetilde{M}\right)-Q\left(\widetilde{N}\right)\right]\left(s,\eta_{2},\eta_{3}\right)ds\Biggr|;\\
\sup_{\left(\eta_{1},\eta_{2},\eta_{3}\right)\in\mathscr{P}'}\Biggl|{\displaystyle \int_{\eta_{3}+\frac{a_{2}}{c}}^{\eta_{2}}}\left[Q\left(\widetilde{M}\right)-Q\left(\widetilde{N}\right)\right]\left(s,\eta_{2},\eta_{3}\right)ds\Biggr|\Biggl\}\label{aoalal-2-1-1-1-1-1-2-1}
\end{multline}
and 
\begin{multline}
\left\Vert \mathcal{T}_{2}\left(\widetilde{M}\right)-\mathcal{T}_{2}\left(\widetilde{N}\right)\right\Vert _{\infty}\leq\\
\max\Biggl\{\left(\sup_{\left(\eta_{1},\eta_{2},\eta_{3}\right)\in\mathscr{P}'}\left(\eta_{1}+\eta_{2}+\eta_{3}\right)\right)\cdot4cS\left(\left\Vert \widetilde{M}\right\Vert +\left\Vert \widetilde{N}\right\Vert \right)\left\Vert \widetilde{M}-\widetilde{N}\right\Vert ;\\
\left(\sup_{\left(\eta_{1},\eta_{2},\eta_{3}\right)\in\mathscr{P}'}\left(\eta_{2}-\eta_{3}-\dfrac{a_{2}}{c}\right)\right)\cdot4cS\left(\left\Vert \widetilde{M}\right\Vert +\left\Vert \widetilde{N}\right\Vert \right)\left\Vert \widetilde{M}-\widetilde{N}\right\Vert \Biggl\}.\label{aoalal-2-1-1-1-1-1-1-1-2-1}
\end{multline}
But $0\leq\eta_{1}+\eta_{2}+\eta_{3}\leq T$ and $a_{2}\leq c\eta_{2}-c\eta_{3}\leq b_{2};$
hence $0\leq\eta_{2}-\eta_{3}-\dfrac{a_{2}}{c}\leq{\displaystyle \frac{1}{c}}\left(b_{2}-a_{2}\right);$
thus 
\begin{align}
\left\Vert \mathcal{T}_{2}\left(\widetilde{M}\right)-\mathcal{T}_{2}\left(\widetilde{N}\right)\right\Vert _{\infty} & \leq\max\Biggl\{ T,{\displaystyle \frac{1}{c}}\left(b_{2}-a_{2}\right)\Biggl\}\cdot4cS\left(\left\Vert \widetilde{M}\right\Vert +\left\Vert \widetilde{N}\right\Vert \right)\left\Vert \widetilde{M}-\widetilde{N}\right\Vert .\label{aoalal-2-1-1-1-1-1-1-1-1-1-2}
\end{align}
{*}{*}{*}{*} Similarly \ref{eq:oslslo-2-1} and \ref{eq:ayyeiz-1-1-2-1}
yield 
\begin{align}
\left\Vert \mathcal{T}_{3}\left(\widetilde{M}\right)-\mathcal{T}_{3}\left(\widetilde{N}\right)\right\Vert _{\infty} & \leq\max\Biggl\{ T,{\displaystyle \frac{1}{c}}\left(b_{2}-a_{2}\right)\Biggl\}\cdot4cS\left(\left\Vert \widetilde{M}\right\Vert +\left\Vert \widetilde{N}\right\Vert \right)\left\Vert \widetilde{M}-\widetilde{N}\right\Vert .\label{aoalal-2-1-1-1-1-1-1-1-1-1-1-2}
\end{align}
and 
\begin{align}
\left\Vert \mathcal{T}_{4}\left(\widetilde{M}\right)-\mathcal{T}_{4}\left(\widetilde{N}\right)\right\Vert _{\infty} & \leq\max\Biggl\{ T,{\displaystyle \frac{1}{c}}\left(b_{1}-a_{1}\right)\Biggl\}\cdot4cS\left(\left\Vert \widetilde{M}\right\Vert +\left\Vert \widetilde{N}\right\Vert \right)\left\Vert \widetilde{M}-\widetilde{N}\right\Vert .\label{aoalal-2-1-1-1-1-1-1-1-1-1-1-1-1}
\end{align}
{*}{*}{*}{*} Now from (\ref{aoalal-2-1-1-1-1-1-1-1-1-2})-(\ref{aoalal-2-1-1-1-1-1-1-1-1-1-2})-
(\ref{aoalal-2-1-1-1-1-1-1-1-1-1-1-2})-(\ref{aoalal-2-1-1-1-1-1-1-1-1-1-1-1-1})
and

$\left\Vert \mathcal{T}\left(\widetilde{M}\right)-\mathcal{T}\left(\widetilde{N}\right)\right\Vert =\max_{1\leq i\leq4}\left\Vert \mathcal{T}_{i}\left(\widetilde{M}\right)-\mathcal{T}_{i}\left(\widetilde{N}\right)\right\Vert _{\infty}$we
have 
\begin{multline}
\left\Vert \mathcal{T}\left(\widetilde{M}\right)-\mathcal{T}\left(\widetilde{N}\right)\right\Vert \leq\\
\underbrace{\max\Biggl\{ T,{\displaystyle \frac{1}{c}}\left(b_{1}-a_{1}\right),{\displaystyle \frac{1}{c}}\left(b_{2}-a_{2}\right)\Biggl\}\cdot4cS}_{\equiv p'}\left(\left\Vert \widetilde{M}\right\Vert +\left\Vert \widetilde{N}\right\Vert \right)\left\Vert \widetilde{M}-\widetilde{N}\right\Vert \label{eq:lsoopa-2}
\end{multline}
For a fixed $\widetilde{N}$ and a fixed $R>0,$ $\left\Vert \widetilde{M}-\widetilde{N}\right\Vert \leq R\implies\left\Vert \widetilde{M}\right\Vert +\left\Vert \widetilde{N}\right\Vert \leq2\left\Vert \widetilde{N}\right\Vert +R$
\\
 thus forall $\varepsilon>0$, 
\[
\left\Vert \widetilde{M}-\widetilde{N}\right\Vert \leq\min\left\{ R;\dfrac{\varepsilon}{p'\left(2\left\Vert \widetilde{N}\right\Vert +R\right)}\right\} \implies\left\Vert \mathcal{T}\left(\widetilde{M}\right)-\mathcal{T}\left(\widetilde{N}\right)\right\Vert \leq\varepsilon.
\]

We deduce that $\mathcal{T}$ is continuous.
\end{proof}

\subsection{Convex set on which the operator is compact.}
\begin{prop}
\label{cor::mzppzo}Suppose $\widetilde{M}=\left(\widetilde{M_{1}},\widetilde{M_{2}},\widetilde{M_{3}},\widetilde{M_{4}}\right)\in C\left(\begin{array}{r}
\mathscr{P}'\end{array};\R^{4}\right)$ such that $\dfrac{\partial\widetilde{M_{i}}}{\partial\eta_{1}},\dfrac{\partial\widetilde{M_{i}}}{\partial\eta_{2}},\dfrac{\partial\widetilde{M_{i}}}{\partial\eta_{3}}$
are defined in $\mathring{\mathscr{P}'},$ except possibly on a finite
number of planes, and are continuous and bounded forall $i=1,2,3,4.$
Then all the derivatives $\dfrac{\partial\mathcal{T}_{i}\left(\widetilde{M}\right)}{\partial\eta_{j}},$$\left(j=1,2,3\right)$,
$\left(i=1,2,3,4\right)$ are defined in $\mathring{\mathscr{P}'},$
except possibly on a finite number of planes, and are continuous and
bounded. 
\end{prop}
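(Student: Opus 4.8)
The plan is to differentiate the explicit representations (\ref{aoalal-2-1})--(\ref{eq:ayyeiz-1-1-2-1}) of $\mathcal{T}_i(\widetilde{M})$ term by term, using the fundamental theorem of calculus, the Leibniz rule for differentiation under the integral sign, the chain rule for the compositions with affine maps, and the product rule against the indicator functions. The first step is to record that $Q(\widetilde{M})=2cS(\widetilde{M_2}\widetilde{M_3}-\widetilde{M_1}\widetilde{M_4})$ inherits the regularity of the $\widetilde{M_i}$: it is continuous on the compact set $\mathscr{P}'$, and its partial derivatives $\partial Q(\widetilde{M})/\partial\eta_j$ are defined, continuous and bounded on $\mathring{\mathscr{P}'}$ off the finite union $\mathcal{E}$ of exceptional planes of the $\widetilde{M_i}$, since a product of continuous functions with that property has the same property and all factors (being continuous on a compact set) and all the $\partial\widetilde{M_i}/\partial\eta_j$ are bounded.

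Next I would carry out one representative component, say $\mathcal{T}_1(\widetilde{M})$ in (\ref{aoalal-2-1}), in detail. Away from the plane $-c\eta_2-c\eta_3=a_1$ the two indicators $\mathbb{I}_{-c\eta_2-c\eta_3\ge a_1}$ and $\mathbb{I}_{-c\eta_2-c\eta_3\le a_1}$ are locally constant, so on $\mathring{\mathscr{P}'}$ minus this plane and minus $\mathcal{E}$ the product rule gives
\begin{multline*}
\frac{\partial\mathcal{T}_1(\widetilde{M})}{\partial\eta_j}=\left(\frac{\partial}{\partial\eta_j}\int_{-\eta_2-\eta_3}^{\eta_1}Q(\widetilde{M})(s,\eta_2,\eta_3)\,ds+\frac{\partial\overline{N_1^0}}{\partial\eta_j}\right)\mathbb{I}_{-c\eta_2-c\eta_3\ge a_1}\\
+\left(\frac{\partial}{\partial\eta_j}\int_{\frac{1}{c}a_1}^{\eta_1}Q(\widetilde{M})(s,\eta_2,\eta_3)\,ds+\frac{\partial\overline{N_1^-}}{\partial\eta_j}\right)\mathbb{I}_{-c\eta_2-c\eta_3\le a_1}.
\end{multline*}
For $j=1$ the fundamental theorem of calculus turns the integral terms into $Q(\widetilde{M})(\eta_1,\eta_2,\eta_3)$, which is continuous off $\mathcal{E}$ and bounded by $\|Q(\widetilde{M})\|_\infty$; for $j=2,3$ the Leibniz rule yields a boundary term $\pm Q(\widetilde{M})(-\eta_2-\eta_3,\eta_2,\eta_3)$ plus $\int_{-\eta_2-\eta_3}^{\eta_1}\partial_{\eta_j}Q(\widetilde{M})(s,\eta_2,\eta_3)\,ds$, and the integration length here is $\eta_1+\eta_2+\eta_3\le T$ (and $\eta_1-\tfrac{1}{c}a_1\le\tfrac{1}{c}(b_1-a_1)$ for the second piece), so both pieces are continuous off $\mathcal{E}$ and bounded by $\|Q(\widetilde{M})\|_\infty+\max\{T,\tfrac1c(b_1-a_1)\}\,\|\partial_{\eta_j}Q(\widetilde{M})\|_\infty$. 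Finally $\overline{N_1^0}$ and $\overline{N_1^-}$ are $N_1^0$, $N_1^-$ composed with affine maps (see (\ref{eq:iiskzi})), hence by the chain rule and the standing hypothesis on the data they are continuous with bounded first derivatives; so $\partial\mathcal{T}_1(\widetilde{M})/\partial\eta_j$ is defined, continuous and bounded on $\mathring{\mathscr{P}'}$ outside the finite set $\mathcal{E}\cup\{-c\eta_2-c\eta_3=a_1\}$.

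The components $\mathcal{T}_2$ and $\mathcal{T}_3$ are handled identically, bringing in the dividing planes $-c\eta_1-2c\eta_3=a_2$ and $c\eta_1+2c\eta_2=b_2$ and integration lengths bounded by $T$ or $\tfrac1c(b_2-a_2)$. The main obstacle is $\mathcal{T}_4$ in (\ref{eq:ayyeiz-1-1-2-1}): there $Q(\widetilde{M})$ is evaluated along the characteristic, i.e. precomposed with the affine substitutions $s\mapsto(-s+2\eta_1+\eta_2+\eta_3,\,s-\eta_1-\eta_3,\,s-\eta_1-\eta_2)$ and $s\mapsto(-s+\tfrac1c b_1,\,s+\eta_1+\eta_2-\tfrac1c b_1,\,s+\eta_1+\eta_3-\tfrac1c b_1)$, so differentiating in $\eta_j$ needs the chain rule inside the integral, the variable upper limits $\eta_1+\eta_2+\eta_3$ and $-\eta_1+\tfrac1c b_1$ produce additional Leibniz boundary terms, and one must verify that the pull-backs under these affine maps of the exceptional planes of $\widetilde{M}$ are again finitely many planes, to which the dividing plane $2c\eta_1+c\eta_2+c\eta_3=b_1$ is adjoined. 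In every case the result is a finite sum of a bounded fundamental-theorem/boundary term and an integral of a bounded function over an interval of length at most $\max\{T,\tfrac1c(b_1-a_1),\tfrac1c(b_2-a_2)\}$ up to fixed constants, and each summand is continuous away from the relevant exceptional planes; taking the union over $i$ and $j$ of all these finitely many planes gives the single finite exceptional set of the statement, which finishes the proof.
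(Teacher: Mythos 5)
Your proposal is correct and follows essentially the same route as the paper: the paper's proof simply observes that the claim "follows immediately" from the explicit formulas (\ref{aoalal-2-1})--(\ref{eq:ayyeiz-1-1-2-1}) because the derivatives of the integrands and of the data are defined, continuous and bounded, and your argument is a careful term-by-term execution of exactly that differentiation (FTC, Leibniz rule, chain rule, and bookkeeping of the dividing planes and pulled-back exceptional planes). The extra detail you supply, in particular for $\mathcal{T}_4$, is a welcome elaboration rather than a different method.
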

\begin{proof}
: It follows immediately from the formula (\ref{aoalal-2-1})-(\ref{eq:ayyeiz-1-1-2-1})
as the derivatives of both the integrand and data are defined, continuous
and bounded.
\end{proof}
Let $E$ be the sub-space of $C\left(\begin{array}{r}
\mathscr{P}'\end{array};\R\right)$ consisting of functions $u$ that are continuous on $\mathscr{P}'$
such that $\dfrac{\partial u}{\partial\eta_{j}},j=1,2,3$ are defined
in $\mathring{\mathscr{P}'}$ except possibly on a finite number of
planes, and are continuous and bounded. The above proposition states
that $\forall\widetilde{M}\in E^{4}\subset C\left(\begin{array}{r}
\mathscr{P}'\end{array};\R^{4}\right),\mathcal{T}\left(\widetilde{M}\right)\in E^{4}.$ 

\begin{prop}
:\label{prop::odlp}

Let us set for all $R>0,$ h
\begin{equation}
\mathscr{M}_{R}\equiv\left\{ \widetilde{N}\in E^{4}:\mathscr{N}\left(\widetilde{N}\right)\equiv\max\left\{ \left\Vert \widetilde{N}\right\Vert ,\left\Vert \dfrac{\partial\widetilde{N}}{\partial\eta_{1}}\right\Vert ,\left\Vert \dfrac{\partial\widetilde{N}}{\partial\eta_{2}}\right\Vert ,\left\Vert \dfrac{\partial\widetilde{N}}{\partial\eta_{3}}\right\Vert \right\} \leq R\right\} .\label{eq:poiop}
\end{equation}

The $\mathscr{M}_{R}$ is a non-empty convex subset of $C\left(\begin{array}{r}
\mathscr{P}'\end{array};\R^{4}\right)$ .
\end{prop}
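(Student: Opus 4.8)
The plan is to verify the two defining properties of $\mathscr{M}_R$ separately, each of which reduces to an elementary observation about the functional $\mathscr{N}$, namely that $\mathscr{N}$ is a seminorm on $E^4$.

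For non-emptiness I would simply exhibit the zero map $\widetilde{N}\equiv 0$. It belongs to $E^4$: it is continuous on $\mathscr{P}'$, and each partial derivative $\partial\widetilde{N}/\partial\eta_j$ is identically zero, hence defined on all of $\mathring{\mathscr{P}'}$, continuous and bounded. Moreover $\mathscr{N}(0)=0\le R$, so $0\in\mathscr{M}_R$ and $\mathscr{M}_R\neq\emptyset$.

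For convexity I would take $\widetilde{N},\widetilde{N}'\in\mathscr{M}_R$ and $\lambda\in[0,1]$, and set $\widetilde{P}=\lambda\widetilde{N}+(1-\lambda)\widetilde{N}'$. First one checks $\widetilde{P}\in E^4$: it is continuous as a linear combination of continuous functions, and off the union of the exceptional planes of $\widetilde{N}$ and of $\widetilde{N}'$ — a set which is still a finite union of planes — each $\partial\widetilde{P}/\partial\eta_j$ exists and equals $\lambda\,\partial\widetilde{N}/\partial\eta_j+(1-\lambda)\,\partial\widetilde{N}'/\partial\eta_j$ by linearity of differentiation, hence is continuous and bounded there. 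Then, since $\|\cdot\|$ is a norm on $4$-tuples and each $\partial_{\eta_j}$ is linear, $\mathscr{N}$ is positively homogeneous and subadditive (a maximum of subadditive functionals), so
\[
\mathscr{N}\bigl(\widetilde{P}\bigr)\le \lambda\,\mathscr{N}\bigl(\widetilde{N}\bigr)+(1-\lambda)\,\mathscr{N}\bigl(\widetilde{N}'\bigr)\le \lambda R+(1-\lambda)R=R,
\]
whence $\widetilde{P}\in\mathscr{M}_R$, and $\mathscr{M}_R$ is convex.

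There is essentially no genuine obstacle in this proposition; the only points deserving a word of care are that the exceptional set of the combination $\widetilde{P}$ is again a \emph{finite} union of planes (so $\widetilde{P}$ really lies in $E^4$ and not merely in $C(\mathscr{P}';\R^4)$), and that the triangle inequality for $\mathscr{N}$ follows from applying the coordinatewise triangle inequality of $\|\cdot\|$ to each of the four entries $\widetilde{N},\partial_{\eta_1}\widetilde{N},\partial_{\eta_2}\widetilde{N},\partial_{\eta_3}\widetilde{N}$ and then taking the maximum.
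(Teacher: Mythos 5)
Your proof is correct and follows essentially the same route as the paper: non-emptiness via the zero function, and convexity by checking the convex combination stays in $E^{4}$ together with the triangle inequality (subadditivity of $\mathscr{N}$). You simply spell out details the paper leaves implicit, such as the exceptional set of the combination being again a finite union of planes.
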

\begin{proof}
$\mathscr{M}_{R}$ is non-empty for it contains the zero function.
For $\widetilde{M},\widetilde{N}\in\mathscr{M}_{R},$ and $\lambda\in\R$
such that $0\leq\lambda\leq1$ we have $\lambda\widetilde{M}+\left(1-\lambda\right)\widetilde{N}\in E^{4}.$
Moreover $\mathscr{N}\left(\lambda\widetilde{M}+\left(1-\lambda\right)\widetilde{N}\right)\leq R$
follows from triangular inequality.
\end{proof}
\begin{prop}
\label{prop::opps}The operator $\mathcal{T}$ is compact on $\mathscr{M}_{R}$
forall $R>0.$
\end{prop}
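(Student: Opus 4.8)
The plan is to apply the Arzel\`a--Ascoli theorem. Since $\mathscr{P}'$ is the image of the compact box $\mathscr{P}$ under the invertible linear change of variables $\mathscr{F}$, it is itself compact (and convex), so a subset of $C(\mathscr{P}';\R^{4})$ is relatively compact for the sup norm if and only if it is uniformly bounded and equicontinuous. We already know from Proposition~\ref{prop::mppp} that $\mathcal{T}$ is continuous; hence it suffices to show that $\mathcal{T}(\mathscr{M}_{R})$ is uniformly bounded and equicontinuous, which is precisely the statement that $\mathcal{T}|_{\mathscr{M}_{R}}$ is a continuous compact map --- the property that will later feed Schauder's Theorem~\ref{sccssq}.

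\emph{Uniform boundedness.} For $\widetilde{M}\in\mathscr{M}_{R}$ we have $\|\widetilde{M}\|\le\mathscr{N}(\widetilde{M})\le R$, so \eqref{eq:solls} gives $\|Q(\widetilde{M})\|_{\infty}\le 2cS(\|\widetilde{M_{2}}\|_{\infty}\|\widetilde{M_{3}}\|_{\infty}+\|\widetilde{M_{1}}\|_{\infty}\|\widetilde{M_{4}}\|_{\infty})\le 4cSR^{2}$. Inserting this into the explicit representation formulas \eqref{aoalal-2-1}--\eqref{eq:ayyeiz-1-1-2-1} for the components $\mathcal{T}_{i}(\widetilde{M})$, using that each integration variable runs over an interval of length at most $\max\{T,\tfrac{1}{c}(b_{1}-a_{1}),\tfrac{1}{c}(b_{2}-a_{2})\}$, and using that the shifted data $\overline{N_{i}^{0}},\overline{N_{i}^{\pm}}$ are compositions of the bounded data with affine maps, one obtains a bound $\|\mathcal{T}(\widetilde{M})\|\le K_{0}(R)$ with $K_{0}(R)$ depending only on $R$, on $c,S,T,a_{i},b_{i}$, and on the finite norms of the data. (The bound can be arranged to have the shape ``$q+pR^{2}$'' up to the constants of the statement, which is the mechanism behind the later self-map step under $pq\le\frac14$; for the present claim only its finiteness is used.)

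\emph{Equicontinuity.} Every $\widetilde{M}\in\mathscr{M}_{R}$ lies in $E^{4}$, so Proposition~\ref{cor::mzppzo} applies: each $\mathcal{T}_{i}(\widetilde{M})$ is continuous on $\mathscr{P}'$ and its first partials are defined, continuous and bounded off a finite union of planes, which always contains the characteristic planes \eqref{eq:ldoopz}, where the indicator functions appearing in \eqref{aoalal-2-1}--\eqref{eq:ayyeiz-1-1-2-1} jump. The new ingredient is that these partials are bounded \emph{uniformly} in $\widetilde{M}\in\mathscr{M}_{R}$: differentiating the representation formulas (Leibniz rule for the integrals with affine limits, chain rule for the affine substitutions occurring in the $\mathcal{T}_{4}$ formula), the $\widetilde{M}$-dependent part of $\partial_{\eta_{j}}\mathcal{T}_{i}(\widetilde{M})$ is controlled by $\|Q(\widetilde{M})\|_{\infty}\le 4cSR^{2}$ together with $\|\partial_{\eta_{j}}Q(\widetilde{M})\|_{\infty}$, and the latter, from $Q(\widetilde{M})=2cS(\widetilde{M_{2}}\widetilde{M_{3}}-\widetilde{M_{1}}\widetilde{M_{4}})$ and $\mathscr{N}(\widetilde{M})\le R$, is at most $8cSR^{2}$; the remaining part is the $\widetilde{M}$-independent derivative of the data, bounded by fixed multiples of $\|N_{i}^{0}\|_{1}$ and $\|N_{i}^{\pm}\|_{1}$. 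Hence $\|\partial_{\eta_{j}}\mathcal{T}_{i}(\widetilde{M})\|_{\infty}\le L(R)$ with $L(R)$ independent of $\widetilde{M}$. Since $\mathscr{P}'$ is convex and $\mathcal{T}_{i}(\widetilde{M})$ is continuous with partials bounded by $L(R)$ off finitely many hyperplanes, $\mathcal{T}_{i}(\widetilde{M})$ is globally Lipschitz on $\mathscr{P}'$ with constant at most $3L(R)$: join any two points of $\mathscr{P}'$ by the straight segment between them, which meets the exceptional planes in finitely many points, and integrate the derivative along it. This Lipschitz constant is uniform over $\mathscr{M}_{R}$, so $\mathcal{T}(\mathscr{M}_{R})$ is equicontinuous, and Arzel\`a--Ascoli then gives that $\overline{\mathcal{T}(\mathscr{M}_{R})}$ is compact in $C(\mathscr{P}';\R^{4})$.

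I expect the main obstacle to be the last step of the equicontinuity argument: passing from ``bounded partials on each smooth piece'' to ``globally Lipschitz, hence equicontinuous'' across the characteristic planes \eqref{eq:ldoopz}. There $\mathcal{T}_{i}(\widetilde{M})$ is only piecewise $C^{1}$ because of the indicator functions, and one must invoke the compatibility conditions \eqref{eq:kqiiq}--\eqref{eq:jqiiq}, which guarantee that $\mathcal{T}_{i}(\widetilde{M})$ is genuinely continuous across those planes, so that the segment-integration estimate is legitimate. Everything else is routine estimation of the explicit kernels, of the same kind as the computations already performed in the proof of Proposition~\ref{prop::mppp}.
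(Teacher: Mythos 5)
Your proof is correct and follows essentially the same route as the paper: uniform boundedness of $\mathcal{T}(\mathscr{M}_{R})$ via $\left\Vert Q(\widetilde{M})\right\Vert _{\infty}\leq4cSR^{2}$ in the representation formulas, equicontinuity via uniform bounds on $\dfrac{\partial\mathcal{T}_{i}(\widetilde{M})}{\partial\eta_{j}}$ (using $\left\Vert \partial_{\eta_{j}}Q(\widetilde{M})\right\Vert _{\infty}\leq8cSR^{2}$) combined with a mean-value/segment argument on the convex set $\mathscr{P}'$ that handles the exceptional planes by continuity, and then Arzel\`a--Ascoli. Your explicit remark about crossing the characteristic planes is exactly the point the paper addresses by extending the mean-value estimate by continuity, so no gap remains.
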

\begin{proof}
First, we prove that $\mathcal{T}\left(\mathcal{\mathscr{M}}_{R}\right)$
is bounded in $C\left(\begin{array}{r}
\mathscr{P}'\end{array};\R^{4}\right).$ From 
\begin{equation}
Q\left(\widetilde{M}\right)={\displaystyle 2cS\left(\widetilde{M_{2}}\widetilde{M_{3}}-\widetilde{M_{1}}\widetilde{M_{4}}\right)}
\end{equation}
we infer 
\begin{equation}
\left\Vert Q\left(\widetilde{M}\right)\right\Vert _{\infty}=\left\Vert 2cS\left(\widetilde{M_{2}}\widetilde{M_{3}}-\widetilde{M_{1}}\widetilde{M_{4}}\right)\right\Vert _{\infty}\leq4cS\left(\mathscr{N}\left(\widetilde{M}\right)\right)^{2}.\label{eq:kooll}
\end{equation}

So (\ref{aoalal-2-1})-(\ref{eq:ayyeiz-1-1-2-1}) yield for all $\widetilde{M}\in\mathcal{\mathscr{M}}_{R}:$ 

\begin{multline}
\left\Vert \mathcal{T}_{1}\left(\widetilde{M}\right)\right\Vert _{\infty}\leq4cS\cdot\max\Biggl\{ T;{\displaystyle \frac{1}{c}}\left(b_{1}-a_{1}\right)\Biggr\}\left(\mathscr{N}\left(\widetilde{M}\right)\right)^{2}\\
+\max\Biggl\{\left\Vert \overline{N_{1}^{0}}\right\Vert _{1};\left\Vert \overline{N_{1}^{-}}\right\Vert _{1}\Biggr\};
\end{multline}
\begin{multline}
\left\Vert \mathcal{T}_{2}\left(\widetilde{M}\right)\right\Vert _{\infty}\leq4cS\cdot\max\Biggl\{ T;{\displaystyle \frac{1}{c}}\left(b_{2}-a_{2}\right)\Biggr\}\left(\mathscr{N}\left(\widetilde{M}\right)\right)^{2}\\
+\max\Biggl\{\left\Vert \overline{N_{2}^{0}}\right\Vert _{1};\left\Vert \overline{N_{2}^{-}}\right\Vert _{1}\Biggr\}\label{eq:ksoloa-1}
\end{multline}

\begin{multline}
\left\Vert \mathcal{T}_{3}\left(\widetilde{M}\right)\right\Vert _{\infty}\leq4cS\cdot\max\Biggl\{ T;{\displaystyle \frac{1}{c}}\left(b_{2}-a_{2}\right)\Biggr\}\left(\mathscr{N}\left(\widetilde{M}\right)\right)^{2}\\
+\max\Biggl\{\left\Vert \overline{N_{3}^{0}}\right\Vert _{1};\left\Vert \overline{N_{3}^{+}}\right\Vert _{1}\Biggr\}\label{eq:ksoloa-1-1}
\end{multline}

\begin{multline}
\left\Vert \mathcal{T}_{4}\left(\widetilde{M}\right)\right\Vert _{\infty}\leq4cS\cdot\max\Biggl\{ T;{\displaystyle \frac{1}{c}}\left(b_{1}-a_{1}\right)\Biggr\}\left(\mathscr{N}\left(\widetilde{M}\right)\right)^{2}\\
+\max\Biggl\{\left\Vert \overline{N_{4}^{0}}\right\Vert _{1};\left\Vert \overline{N_{4}^{+}}\right\Vert _{1}\Biggr\}.\label{eq:ksoloa-1-1-1}
\end{multline}

Then for all $\widetilde{M}\in\mathcal{\mathscr{M}}_{R}:$ 
\begin{multline}
\left\Vert \mathcal{T}\left(\widetilde{M}\right)\right\Vert \leq4cS\cdot\max\Biggl\{ T;{\displaystyle \frac{1}{c}}\left(b_{1}-a_{1}\right);{\displaystyle \frac{1}{c}}\left(b_{2}-a_{2}\right)\Biggr\} R^{2}+\\
\max_{1\leq i\leq4}\Biggl\{\left\Vert \overline{N_{i}^{0}}\right\Vert _{1};\left\Vert \overline{N_{1}^{-}}\right\Vert _{1};\left\Vert \overline{N_{2}^{-}}\right\Vert _{1};\left\Vert \overline{N_{3}^{+}}\right\Vert _{1};\left\Vert \overline{N_{4}^{+}}\right\Vert _{1}\Biggr\}\equiv R_{1}.\label{eq:lospp}
\end{multline}
 Second, we prove that $\mathcal{T}\left(\mathcal{\mathscr{M}}_{R}\right)$
is equicontinuous in $C\left(\begin{array}{r}
\mathscr{P}'\end{array};\R^{4}\right).$ From 
\begin{equation}
{\displaystyle \dfrac{\partial Q\left(\widetilde{M}\right)}{\partial\eta_{j}}=2cS\left(\dfrac{\partial\widetilde{M_{2}}}{\partial\eta_{j}}\widetilde{M_{3}}+\widetilde{M_{2}}\dfrac{\partial\widetilde{M_{3}}}{\partial\eta_{j}}-\dfrac{\partial\widetilde{M_{1}}}{\partial\eta_{j}}\widetilde{M_{4}}-\widetilde{M_{1}}\dfrac{\partial\widetilde{M_{4}}}{\partial\eta_{j}}\right)}
\end{equation}
we infer 

\begin{align}
\left\Vert \dfrac{\partial Q\left(\widetilde{M}\right)}{\partial\eta_{j}}\right\Vert _{\infty} & \leq2cS\cdot4\left(\mathscr{N}\left(\widetilde{M}\right)\right)^{2}\leq8cS\left(\mathscr{N}\left(\widetilde{M}\right)\right)^{2}.\label{eq:lopoi}
\end{align}
On one hand we have
\begin{equation}
\left\Vert \dfrac{\partial\mathcal{T}_{i}\left(\widetilde{M}\right)}{\partial\eta_{i}}\right\Vert _{\infty}\leq4cs\left(\mathscr{N}\left(\widetilde{M}\right)\right)^{2},i=1,2,3.\label{eq:ikiioz}
\end{equation}

On other hand, by derivation in the equations (\ref{aoalal-2-1})-(\ref{eq:ayyeiz-1-1-2-1}),
taking the norm and taking into account (\ref{eq:lopoi}), we have
\begin{multline}
\left\Vert \dfrac{\partial\mathcal{T}_{1}\left(\widetilde{M}\right)}{\partial\eta_{2}}\right\Vert _{\infty}\leq4cs\Biggl(1+2\cdot\max\Biggl\{ T;{\displaystyle \frac{1}{c}}\left(b_{1}-a_{1}\right)\Biggr\}\Biggr)\left(\mathscr{N}\left(\widetilde{M}\right)\right)^{2}\\
+\max\Biggl\{\left\Vert \overline{N_{1}^{0}}\right\Vert _{1};\left\Vert \overline{N_{1}^{-}}\right\Vert _{1}\Biggr\}.\label{eq:sloos}
\end{multline}

\begin{multline}
\left\Vert \dfrac{\partial\mathcal{T}_{1}\left(\widetilde{M}\right)}{\partial\eta_{3}}\right\Vert _{\infty}\leq4cs\Biggl(1+2\cdot\max\Biggl\{ T;{\displaystyle \frac{1}{c}}\left(b_{1}-a_{1}\right)\Biggr\}\Biggr)\left(\mathscr{N}\left(\widetilde{M}\right)\right)^{2}\\
+\max\Biggl\{\left\Vert \overline{N_{1}^{0}}\right\Vert _{1};\left\Vert \overline{N_{1}^{-}}\right\Vert _{1}\Biggr\};\label{eq:qmpql}
\end{multline}

\begin{multline}
\left\Vert \dfrac{\partial\mathcal{T}_{2}\left(\widetilde{M}\right)}{\partial\eta_{1}}\right\Vert _{\infty}\leq4cs\Biggl(1+2\cdot\max\Biggl\{ T;{\displaystyle \frac{1}{c}}\left(b_{2}-a_{2}\right)\Biggr\}\Biggr)\left(\mathscr{N}\left(\widetilde{M}\right)\right)^{2}\\
+\max\Biggl\{\left\Vert \overline{N_{2}^{0}}\right\Vert _{1};\left\Vert \overline{N_{2}^{-}}\right\Vert _{1}\Biggr\};\label{eq:qmpql-1}
\end{multline}
\begin{multline}
\left\Vert \dfrac{\partial\mathcal{T}_{2}\left(\widetilde{M}\right)}{\partial\eta_{3}}\right\Vert _{\infty}\leq4cs\Biggl(1+2\cdot\max\Biggl\{ T;{\displaystyle \frac{1}{c}}\left(b_{2}-a_{2}\right)\Biggr\}\Biggr)\left(\mathscr{N}\left(\widetilde{M}\right)\right)^{2}\\
+\max\Biggl\{\left\Vert \overline{N_{2}^{0}}\right\Vert _{1};\left\Vert \overline{N_{2}^{-}}\right\Vert _{1}\Biggr\};\label{eq:qmpql-1-1}
\end{multline}
\begin{multline}
\left\Vert \dfrac{\partial\mathcal{T}_{3}\left(\widetilde{M}\right)}{\partial\eta_{1}}\right\Vert _{\infty}\leq4cs\Biggl(1+2\cdot\max\Biggl\{ T;{\displaystyle \frac{1}{c}}\left(b_{2}-a_{2}\right)\Biggr\}\Biggr)\left(\mathscr{N}\left(\widetilde{M}\right)\right)^{2}\\
+\max\Biggl\{\left\Vert \overline{N_{3}^{0}}\right\Vert _{1};\left\Vert \overline{N_{3}^{+}}\right\Vert _{1}\Biggr\};\label{eq:qmpql-1-1-1}
\end{multline}
\begin{multline}
\left\Vert \dfrac{\partial\mathcal{T}_{3}\left(\widetilde{M}\right)}{\partial\eta_{2}}\right\Vert _{\infty}\leq4cs\Biggl(1+2\cdot\max\Biggl\{ T;{\displaystyle \frac{1}{c}}\left(b_{2}-a_{2}\right)\Biggr\}\Biggr)\left(\mathscr{N}\left(\widetilde{M}\right)\right)^{2}\\
+\max\Biggl\{\left\Vert \overline{N_{3}^{0}}\right\Vert _{1};\left\Vert \overline{N_{3}^{+}}\right\Vert _{1}\Biggr\};\label{eq:qmpql-1-1-1-1}
\end{multline}
\begin{multline}
\left\Vert \dfrac{\partial\mathcal{T}_{4}\left(\widetilde{M}\right)}{\partial\eta_{1}}\right\Vert _{\infty}\leq4cs\left(1+2\cdot\max\Biggl\{4T;{\displaystyle \frac{2}{c}}\left(b_{1}-a_{1}\right)\Biggr\}\right)\left(\mathscr{N}\left(\widetilde{M}\right)\right)^{2}\\
+\max\Biggl\{\left\Vert \overline{N_{4}^{0}}\right\Vert _{1};\left\Vert \overline{N_{4}^{+}}\right\Vert _{1}\Biggr\};\label{eq:aokkao}
\end{multline}

\begin{multline}
\left\Vert \dfrac{\partial\mathcal{T}_{4}\left(\widetilde{M}\right)}{\partial\eta_{2}}\right\Vert _{\infty}\leq4cs\left(1+2\cdot\max\Biggl\{2T;{\displaystyle \frac{1}{c}}\left(b_{1}-a_{1}\right)\Biggr\}\right)\left(\mathscr{N}\left(\widetilde{M}\right)\right)^{2}\\
+\max\Biggl\{\left\Vert \overline{N_{4}^{0}}\right\Vert _{1};\left\Vert \overline{N_{4}^{+}}\right\Vert _{1}\Biggr\};\label{eq:koalla}
\end{multline}
\begin{multline}
\left\Vert \dfrac{\partial\mathcal{T}_{4}\left(\widetilde{M}\right)}{\partial\eta_{3}}\right\Vert _{\infty}\leq4cs\left(1+2\cdot\max\Biggl\{2T;{\displaystyle \frac{1}{c}}\left(b_{1}-a_{1}\right)\Biggr\}\right)\left(\mathscr{N}\left(\widetilde{M}\right)\right)^{2}\\
+\max\Biggl\{\left\Vert \overline{N_{4}^{0}}\right\Vert _{1};\left\Vert \overline{N_{4}^{+}}\right\Vert _{1}\Biggr\}.\label{eq:msppp}
\end{multline}

Equations (\ref{eq:ikiioz}), (\ref{eq:sloos})-(\ref{eq:msppp})
and (\ref{eq:poiop}) imply that $\dfrac{\partial\mathcal{T}_{i}\left(\widetilde{M}\right)}{\partial\eta_{j}},\left(j=1,2,3\right),\left(i=1,2,3,4\right)$

are uniformly bounded on their domain when $\widetilde{M}$ varies
within $\mathcal{\mathscr{M}}_{R}$. 

As the $\dfrac{\partial\mathcal{T}_{i}\left(\widetilde{M}\right)}{\partial\eta_{j}}$
are continuous, $\forall\widetilde{M}\in\mathcal{\mathscr{M}}_{R}$,
for $i=1,2,3,4,$ $\mathcal{T}_{i}\left(\widetilde{M}\right)\in C\left(\begin{array}{r}
\mathscr{P}'\end{array};\R\right)$ is differentiable on the domain of $\left(\dfrac{\partial\mathcal{T}_{i}\left(\widetilde{M}\right)}{\partial\eta_{j}}\right)_{i,j}$.
If $\mathcal{T}_{i}\left(\widetilde{M}\right)$ is differentiable
at $\left(\eta_{1},\eta_{2},\eta_{3}\right),$ let $d\left(\mathcal{T}_{i}\left(\widetilde{M}\right)\right)\left(\eta_{1},\eta_{2},\eta_{3}\right)$
denote the differential of $\mathcal{T}_{i}\left(\widetilde{M}\right)$
at $\left(\eta_{1},\eta_{2},\eta_{3}\right).$

$d\left(\mathcal{T}_{i}\left(\widetilde{M}\right)\right)\left(\eta_{1},\eta_{2},\eta_{3}\right)\in\mathscr{L}\left(\R^{3},\R\right),$
space of linear continous functionals on $\R^{3}.$ As 

As $\dfrac{\partial\mathcal{T}_{i}\left(\widetilde{M}\right)}{\partial\eta_{j}},\left(j=1,2,3\right)$
are uniformly bounded, we easily deduce that there exists a constant
$b^{i}$ independant of $\widetilde{M}$ such that 
\begin{equation}
\left\Vert d\left(\mathcal{T}_{i}\left(\widetilde{M}\right)\right)\left(\eta_{1},\eta_{2},\eta_{3}\right)\right\Vert _{\mathscr{L}\left(\R^{3},\R\right)}\leq b^{i}.
\end{equation}

The domain $\mathscr{P}'$ is convex, being a parallelepiped. For
$\left(\eta_{1},\eta_{2},\eta_{3}\right),\left(\eta_{1}',\eta_{2}',\eta_{3}'\right)\in\mathscr{P}'$
if forall $i=1,2,3,4$ $\dfrac{\partial\mathcal{T}_{i}\left(\widetilde{M}\right)}{\partial\eta_{j}}$
are defined on the segment  
\begin{multline*}
   \left[\left(\eta_{1},\eta_{2},\eta_{3}\right),\left(\eta_{1}',\eta_{2}',\eta_{3}'\right)\right]\equiv\\ \left\{ \left(\eta_{1},\eta_{2},\eta_{3}\right)+\alpha\left(\eta_{1}'-\eta_{1},\eta_{2}'-\eta_{2},\eta_{3}'-\eta_{3}\right)\vert\left(0\leq\alpha\leq1\right)\right\},  
\end{multline*}
then by the mean value inequality for all $i=1,2,3,4,$ 
\begin{multline}
\left|\mathcal{T}_{i}\left(\widetilde{M}\right)\left(\eta_{1},\eta_{2},\eta_{3}\right)-\mathcal{T}_{i}\left(\widetilde{M}\right)\left(\eta_{1}',\eta_{2}',\eta_{3}'\right)\right|\\
\leq b^{i}\left\Vert \left(\eta_{1},\eta_{2},\eta_{3}\right)-\left(\eta_{1}',\eta_{2}',\eta_{3}'\right)\right\Vert _{\R^{3}}.
\end{multline}

Then forall $\varepsilon>0,$ with 

\begin{equation}
{\displaystyle \eta_{\varepsilon}=\min_{1\leq i\leq4}\dfrac{\varepsilon}{b^{i}}},\label{eq:qkioalq}
\end{equation}
we have

\begin{multline}
\left\Vert \left(\eta_{1},\eta_{2},\eta_{3}\right)-\left(\eta_{1}',\eta_{2}',\eta_{3}'\right)\right\Vert _{\R^{3}}<\eta_{\varepsilon}\\
\implies\left\Vert \mathcal{T}\left(\widetilde{M}\right)\left(\eta_{1},\eta_{2},\eta_{3}\right)-\mathcal{T}\left(\widetilde{M}\right)\left(\eta_{1}',\eta_{2}',\eta_{3}'\right)\right\Vert _{\R^{4}}\leq\varepsilon.\label{eq:ayueacw}
\end{multline}

The $\dfrac{\partial\mathcal{T}_{i}\left(\widetilde{M}\right)}{\partial\eta_{j}}$
may not be defined only at points of a plane, hence the later points
are adherent to the domain of $\left(\dfrac{\partial\mathcal{T}_{i}\left(\widetilde{M}\right)}{\partial\eta_{j}}\right)_{i,j}.$
Therefore by continuity of $\mathcal{T}\left(\widetilde{M}\right)$
on $\mathscr{P}'$, (\ref{eq:ayueacw}) can be extended to $\mathscr{P}'$:
\begin{multline*}
\forall\varepsilon>0,\exists\eta_{\varepsilon}>0,\forall\widetilde{M}\in\mathscr{M}_{R},\forall\left(\eta_{1},\eta_{2},\eta_{3}\right),\left(\eta_{1}',\eta_{2}',\eta_{3}'\right)\in\mathscr{P}':\\
\left\Vert \left(\eta_{1},\eta_{2},\eta_{3}\right)-\left(\eta_{1}',\eta_{2}',\eta_{3}'\right)\right\Vert <\eta_{\varepsilon}\\
\implies\left\Vert \mathcal{T}\left(\widetilde{M}\right)\left(\eta_{1},\eta_{2},\eta_{3}\right)-\mathcal{T}\left(\widetilde{M}\right)\left(\eta_{1}',\eta_{2}',\eta_{3}'\right)\right\Vert _{\R^{4}}\leq\varepsilon.
\end{multline*}

Therefore $\mathcal{T}\left(\mathscr{M}_{R}\right)$ is equicontinuous
in $C\left(\mathscr{P}';\R^{4}\right)$and by Arzelà-Ascoli theorem,
$\mathcal{T}\left(\mathcal{\mathscr{M}}_{R}\right)$ is relatively
compact in $C\left(\mathscr{P}';\R^{4}\right)$ i.e. $\mathcal{T}$
is compact on $\mathcal{\mathscr{M}}_{R}.$ 
\end{proof}

\subsection{Stable convex set under the operator.}
\begin{prop}
Forall $\widetilde{M}\in\mathscr{M}_{R},$ we have $\mathcal{T}\left(\widetilde{M}\right)\in E^{4}$
and

\begin{multline}
\mathscr{N}\left(\mathcal{T}\left(\widetilde{M}\right)\right)\leq\\4cS\left(1+2\cdot\max\Biggl\{4T;{\displaystyle \frac{2}{c}}\left(b_{1}-a_{1}\right);{\displaystyle \frac{1}{c}}\left(b_{2}-a_{2}\right)\Biggr\}\right)R^{2}\\
+\max\Biggl\{\left\Vert \overline{N_{1}^{0}}\right\Vert _{1};\left\Vert \overline{N_{1}^{-}}\right\Vert _{1};\left\Vert \overline{N_{2}^{0}}\right\Vert _{1};\left\Vert \overline{N_{2}^{-}}\right\Vert _{1};\left\Vert \overline{N_{3}^{0}}\right\Vert _{1};\left\Vert \overline{N_{3}^{+}}\right\Vert _{1};\left\Vert \overline{N_{4}^{0}}\right\Vert _{1};\left\Vert \overline{N_{4}^{+}}\right\Vert _{1}\Biggr\}.\label{eq:olsoep}
\end{multline}
\end{prop}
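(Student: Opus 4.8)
The plan is to read the bound (\ref{eq:olsoep}) straight off the estimates already assembled in the proof of Proposition~\ref{prop::opps}, combined with the regularity statement of Proposition~\ref{cor::mzppzo}.

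First I would note that $\mathscr{M}_R\subset E^4$ by its very definition, so Proposition~\ref{cor::mzppzo} applies to every $\widetilde{M}\in\mathscr{M}_R$ and yields $\mathcal{T}(\widetilde{M})\in E^4$: each $\dfrac{\partial\mathcal{T}_i(\widetilde{M})}{\partial\eta_j}$ is defined off a finite union of planes, and is continuous and bounded. In particular $\mathscr{N}(\mathcal{T}(\widetilde{M}))$ is well defined, being the maximum of $\|\mathcal{T}(\widetilde{M})\|$ and the twelve quantities $\left\Vert\dfrac{\partial\mathcal{T}_i(\widetilde{M})}{\partial\eta_j}\right\Vert_\infty$, $i=1,\dots,4$, $j=1,2,3$.

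Next I would gather the relevant bounds: (\ref{eq:lospp}) controls $\|\mathcal{T}(\widetilde{M})\|$; (\ref{eq:ikiioz}) controls the three ``diagonal'' derivatives $\left\Vert\dfrac{\partial\mathcal{T}_i(\widetilde{M})}{\partial\eta_i}\right\Vert_\infty$ for $i=1,2,3$; and (\ref{eq:sloos})--(\ref{eq:msppp}) control the remaining nine derivatives. Since $\widetilde{M}\in\mathscr{M}_R$ we have $\mathscr{N}(\widetilde{M})\leq R$, so in each of these estimates the factor $(\mathscr{N}(\widetilde{M}))^2$ may be replaced by $R^2$; every right-hand side then becomes a coefficient times $R^2$ plus a data term, and each such data term is dominated by $\max\{\|\overline{N_1^0}\|_1;\|\overline{N_1^-}\|_1;\|\overline{N_2^0}\|_1;\|\overline{N_2^-}\|_1;\|\overline{N_3^0}\|_1;\|\overline{N_3^+}\|_1;\|\overline{N_4^0}\|_1;\|\overline{N_4^+}\|_1\}$, which is precisely the additive term in (\ref{eq:olsoep}). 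Taking the maximum over all thirteen estimates then bounds $\mathscr{N}(\mathcal{T}(\widetilde{M}))$ by (largest coefficient)$\cdot R^2$ plus that data term.

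The one genuinely fiddly point — where a slip is most likely — is identifying the largest coefficient. The diagonal-derivative coefficient of (\ref{eq:ikiioz}) is $4cS$; the coefficient in (\ref{eq:lospp}) is $4cS\max\left\{T;\dfrac{1}{c}(b_1-a_1);\dfrac{1}{c}(b_2-a_2)\right\}$; and the off-diagonal coefficients of (\ref{eq:sloos})--(\ref{eq:msppp}) are the four quantities $4cS\left(1+2\max\left\{T;\dfrac{1}{c}(b_1-a_1)\right\}\right)$, $4cS\left(1+2\max\left\{T;\dfrac{1}{c}(b_2-a_2)\right\}\right)$, $4cS\left(1+2\max\left\{2T;\dfrac{1}{c}(b_1-a_1)\right\}\right)$ and $4cS\left(1+2\max\left\{4T;\dfrac{2}{c}(b_1-a_1)\right\}\right)$. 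Since
\[
\max\left\{4T;\dfrac{2}{c}(b_1-a_1)\right\}\geq\max\left\{2T;\dfrac{1}{c}(b_1-a_1)\right\}\geq\max\left\{T;\dfrac{1}{c}(b_1-a_1)\right\},
\]
and all of these, as well as $\max\left\{T;\dfrac{1}{c}(b_2-a_2)\right\}$ and the bare $1$, are dominated by $\max\left\{4T;\dfrac{2}{c}(b_1-a_1);\dfrac{1}{c}(b_2-a_2)\right\}$, the largest coefficient among the thirteen estimates is $4cS\left(1+2\max\left\{4T;\dfrac{2}{c}(b_1-a_1);\dfrac{1}{c}(b_2-a_2)\right\}\right)$, which is exactly the coefficient of $R^2$ in (\ref{eq:olsoep}). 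Substituting gives the claimed inequality, and together with $\mathcal{T}(\widetilde{M})\in E^4$ this completes the argument.
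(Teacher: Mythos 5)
Your proposal is correct and takes essentially the same route as the paper: $\mathcal{T}\left(\widetilde{M}\right)\in E^{4}$ from Proposition \ref{cor::mzppzo}, and then (\ref{eq:olsoep}) by inserting $\mathscr{N}\left(\widetilde{M}\right)\leq R$ into (\ref{eq:lospp}), (\ref{eq:ikiioz}) and (\ref{eq:sloos})--(\ref{eq:msppp}) and taking the largest coefficient of $R^{2}$ together with the overall maximum of the data norms, the paper differing only in that it first groups the derivative estimates into bounds for $\left\Vert \partial\mathcal{T}\left(\widetilde{M}\right)/\partial\eta_{j}\right\Vert$, $j=1,2,3$, before combining. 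One harmless slip in your wording: the bare $1$ need not be dominated by $\max\left\{ 4T;\tfrac{2}{c}\left(b_{1}-a_{1}\right);\tfrac{1}{c}\left(b_{2}-a_{2}\right)\right\}$, but the comparison you actually need, $1\leq1+2\max\left\{ 4T;\tfrac{2}{c}\left(b_{1}-a_{1}\right);\tfrac{1}{c}\left(b_{2}-a_{2}\right)\right\}$, holds trivially, so the conclusion stands.
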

\begin{proof} We have $\widetilde{M}\in\mathscr{M}_{R}\subset E^{4}\implies\mathcal{T}\left(\widetilde{M}\right)\in E^{4}.$
(proposition (\ref{cor::mzppzo})). Using (\ref{eq:ikiioz}), (\ref{eq:qmpql-1}),
(\ref{eq:qmpql-1-1-1}) and (\ref{eq:aokkao}) we obtain 
\begin{multline}
\left\Vert \dfrac{\partial\mathcal{T}\left(\widetilde{M}\right)}{\partial\eta_{1}}\right\Vert \leq4cS\left(1+2\cdot\max\Biggl\{4T;{\displaystyle \frac{2}{c}}\left(b_{1}-a_{1}\right);{\displaystyle \frac{1}{c}}\left(b_{2}-a_{2}\right)\Biggr\}\right)\\
\left(\mathscr{N}\left(\widetilde{M}\right)\right)^{2}+\max\Biggl\{\left\Vert \overline{N_{2}^{0}}\right\Vert _{1};\left\Vert \overline{N_{2}^{-}}\right\Vert _{1};\left\Vert \overline{N_{3}^{0}}\right\Vert _{1};\left\Vert \overline{N_{3}^{+}}\right\Vert _{1};\left\Vert \overline{N_{4}^{0}}\right\Vert _{1};\left\Vert \overline{N_{4}^{+}}\right\Vert _{1}\Biggr\}.\label{eq:aokkao-1-2-3}
\end{multline}

Similarly we have 

\begin{multline}
\left\Vert \dfrac{\partial\mathcal{T}\left(\widetilde{M}\right)}{\partial\eta_{2}}\right\Vert \leq4cS\left(1+2\cdot\max\Biggl\{2T;{\displaystyle \frac{1}{c}}\left(b_{1}-a_{1}\right);{\displaystyle \frac{1}{c}}\left(b_{2}-a_{2}\right)\Biggr\}\right)\left(\mathscr{N}\left(\widetilde{M}\right)\right)^{2}\\
+\max\Biggl\{\left\Vert \overline{N_{1}^{0}}\right\Vert _{1};\left\Vert \overline{N_{1}^{-}}\right\Vert _{1};\left\Vert \overline{N_{3}^{0}}\right\Vert _{1};\left\Vert \overline{N_{3}^{+}}\right\Vert _{1};\left\Vert \overline{N_{4}^{0}}\right\Vert _{1};\left\Vert \overline{N_{4}^{+}}\right\Vert _{1}\Biggr\}.\label{eq:aokkao-1-2-1-3}
\end{multline}

\begin{multline}
\left\Vert \dfrac{\partial\mathcal{T}\left(\widetilde{M}\right)}{\partial\eta_{3}}\right\Vert \leq4cS\left(1+2\cdot\max\Biggl\{2T;{\displaystyle \frac{1}{c}}\left(b_{1}-a_{1}\right);{\displaystyle \frac{1}{c}}\left(b_{2}-a_{2}\right)\Biggr\}\right)\left(\mathscr{N}\left(\widetilde{M}\right)\right)^{2}\\
+\max\Biggl\{\left\Vert \overline{N_{1}^{0}}\right\Vert _{1};\left\Vert \overline{N_{1}^{-}}\right\Vert _{1};\left\Vert \overline{N_{2}^{0}}\right\Vert _{1};\left\Vert \overline{N_{2}^{-}}\right\Vert _{1};\left\Vert \overline{N_{4}^{0}}\right\Vert _{1};\left\Vert \overline{N_{4}^{+}}\right\Vert _{1}\Biggr\}.\label{eq:aokkao-1-2-1-1-2}
\end{multline}

By taking $\widetilde{N}=\mathcal{T}\left(\widetilde{M}\right)$ and
using (\ref{eq:lospp}), (\ref{eq:aokkao-1-2-3}), (\ref{eq:aokkao-1-2-1-3})
and (\ref{eq:aokkao-1-2-1-1-2}), we obtain for $\widetilde{M}\in\mathscr{M}_{R}$
(\ref{eq:olsoep}).
\end{proof}
Let us set 
\begin{equation}
p\equiv4cS\left(1+2\cdot\max\Biggl\{4T;{\displaystyle \frac{2}{c}}\left(b_{1}-a_{1}\right);{\displaystyle \frac{1}{c}}\left(b_{2}-a_{2}\right)\Biggr\}\right)\label{eq:loaoao}
\end{equation}
and 
\begin{multline}
q''\equiv\\
\max\Biggl\{\left\Vert \overline{N_{1}^{0}}\right\Vert _{1};\left\Vert \overline{N_{1}^{-}}\right\Vert _{1};\left\Vert \overline{N_{2}^{0}}\right\Vert _{1};\left\Vert \overline{N_{2}^{-}}\right\Vert _{1};\left\Vert \overline{N_{3}^{0}}\right\Vert _{1};\left\Vert \overline{N_{3}^{+}}\right\Vert _{1};\left\Vert \overline{N_{4}^{0}}\right\Vert _{1};\left\Vert \overline{N_{4}^{+}}\right\Vert _{1}\Biggr\}.
\end{multline}
Equations (\ref{eq:iiskzi}) yield 
\begin{multline}
\dfrac{\partial\overline{N_{1}^{0}}}{\partial\eta_{2}}\left(\eta_{2},\eta_{3}\right)=-c\dfrac{\partial}{\partial x}N_{1}^{0}\left(x,y\right)+c\dfrac{\partial}{\partial y}N_{1}^{0}\left(x,y\right),\\
\left(x,y\right)=\left(-c\eta_{2}-c\eta_{3},c\eta_{2}-c\eta_{3}\right)\\
\dfrac{\partial\overline{N_{1}^{0}}}{\partial\eta_{3}}\left(\eta_{2},\eta_{3}\right)=-c\dfrac{\partial}{\partial x}N_{1}^{0}\left(x,y\right)-c\dfrac{\partial}{\partial y}N_{1}^{0}\left(x,y\right),\\
\left(x,y\right)=\left(-c\eta_{2}-c\eta_{3},c\eta_{2}-c\eta_{3}\right)\label{aoalal-2-1-2-1-2}
\end{multline}

\begin{multline}
\dfrac{\partial\overline{N_{1}^{-}}}{\partial\eta_{2}}\left(\eta_{2},\eta_{3}\right)=\dfrac{\partial}{\partial t}N_{1}^{-}\left(t,y\right)+c\dfrac{\partial}{\partial y}N_{1}^{-}\left(t,y\right),\\
\left(t,y\right)=\left({\displaystyle \frac{1}{c}}a_{1}+\eta_{2}+\eta_{3},c\eta_{2}-c\eta_{3}\right)\\
\dfrac{\partial\overline{N_{1}^{-}}}{\partial\eta_{3}}\left(\eta_{2},\eta_{3}\right)=\dfrac{\partial}{\partial t}N_{1}^{-}\left(t,y\right)-c\dfrac{\partial}{\partial y}N_{1}^{-}\left(t,y\right),\\
\left(t,y\right)=\left({\displaystyle \frac{1}{c}}a_{1}+\eta_{2}+\eta_{3},c\eta_{2}-c\eta_{3}\right)\label{aoalal-2-1-2-1-2-1}
\end{multline}

\begin{multline}
\left\Vert \overline{N_{1}^{0}}\right\Vert _{\infty}\leq\left\Vert N_{1}^{0}\right\Vert _{1};\left\Vert \dfrac{\partial\overline{N_{1}^{0}}}{\partial\eta_{2}}\right\Vert _{\infty}\leq2c\left\Vert N_{1}^{0}\right\Vert _{1};\left\Vert \dfrac{\partial\overline{N_{1}^{0}}}{\partial\eta_{3}}\right\Vert _{\infty}\leq2c\left\Vert N_{1}^{0}\right\Vert _{1}\\
\left\Vert \overline{N_{1}^{-}}\right\Vert _{\infty}\leq\left\Vert N_{1}^{-}\right\Vert _{1};\left\Vert \dfrac{\partial\overline{N_{1}^{-}}}{\partial\eta_{2}}\right\Vert _{\infty}\leq\left(1+c\right)\left\Vert N_{1}^{-}\right\Vert _{1};\left\Vert \dfrac{\partial\overline{N_{1}^{-}}}{\partial\eta_{3}}\right\Vert _{\infty}\leq\left(1+c\right)\left\Vert N_{1}^{-}\right\Vert _{1}
\end{multline}

\begin{equation}
\left\Vert \overline{N_{1}^{0}}\right\Vert _{1}\leq\max\left\{ 1;2c\right\} \left\Vert N_{1}^{0}\right\Vert _{1};\left\Vert \overline{N_{1}^{-}}\right\Vert _{1}\leq\left(1+c\right)\left\Vert N_{1}^{-}\right\Vert _{1}\label{eq:lsokzkz}
\end{equation}

Similarly, (\ref{eq:ikoik}), (\ref{eq:olooki}), (\ref{eq:lopolop})
yield inequalities which with (\ref{eq:lsokzkz}) yield

\begin{align}
q'' & \leq q\label{eq:loso}
\end{align}
where 
\begin{multline}
q\equiv\\
\max\Biggl\{\max\left\{ 1;2c\right\} \left\Vert N_{1}^{0}\right\Vert _{1};\left(1+c\right)\left\Vert N_{1}^{-}\right\Vert _{1};\max\left\{ 1;2c\right\} \left\Vert N_{2}^{0}\right\Vert _{1};\\\max\left\{ 2;\left(1+c\right)\right\} \left\Vert N_{2}^{-}\right\Vert _{1};
\max\left\{ 1;2c\right\} \left\Vert N_{3}^{0}\right\Vert _{1};\max\left\{ 2;\left(1+c\right)\right\} \left\Vert N_{3}^{+}\right\Vert _{1};\\ \max\left\{ 1;2c\right\} \left\Vert N_{4}^{0}\right\Vert _{1};\left(2+c\right)\left\Vert N_{4}^{+}\right\Vert _{1}\Biggr\};\label{eq:kdipzp}
\end{multline}

Now, (\ref{eq:olsoep}) yields 
\begin{equation}
\mathscr{N}\left(\mathcal{T}\left(\widetilde{M}\right)\right)\leq pR^{2}+q.\label{eq:olol}
\end{equation}

\begin{prop}
\label{prop::eolpa}Suppose 
\begin{equation}
pq\leq\dfrac{1}{4}\label{eq:sjiid}
\end{equation}
and 
\begin{equation}
\dfrac{1-\sqrt{1-4pq}}{2p}\leq R\leq\dfrac{1+\sqrt{1-4pq}}{2p}.\label{eq:asloz}
\end{equation}

Then $\mathcal{T}\left(\mathcal{\mathscr{M}}_{R}\right)\subset\mathcal{\mathcal{\mathscr{M}}_{R}}.$ 
\end{prop}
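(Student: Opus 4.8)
The plan is to reduce the inclusion $\mathcal{T}\left(\mathscr{M}_{R}\right)\subset\mathscr{M}_{R}$ to a single scalar inequality in $R$, using the quantitative estimate already in hand. Fix an arbitrary $\widetilde{M}\in\mathscr{M}_{R}$. By the previous proposition, $\mathcal{T}\left(\widetilde{M}\right)\in E^{4}$, so membership in $\mathscr{M}_{R}$ will follow once we check that $\mathscr{N}\left(\mathcal{T}\left(\widetilde{M}\right)\right)\leq R$. By inequality (\ref{eq:olol}) we already know $\mathscr{N}\left(\mathcal{T}\left(\widetilde{M}\right)\right)\leq pR^{2}+q$, so it suffices to prove the purely numerical inequality $pR^{2}+q\leq R$, i.e. $pR^{2}-R+q\leq0$.

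Next I would analyze the quadratic $\varphi\left(R\right)\equiv pR^{2}-R+q$. From the definition (\ref{eq:loaoao}) we have $p=4cS\left(1+2\max\{\cdots\}\right)>0$, so $\varphi$ is an upward parabola. Its discriminant is $1-4pq$, which is nonnegative precisely because of the hypothesis (\ref{eq:sjiid}), namely $pq\leq\dfrac{1}{4}$. Hence $\varphi$ has the two real roots $R_{\pm}=\dfrac{1\pm\sqrt{1-4pq}}{2p}$, with $R_{-}\leq R_{+}$, and $\varphi\left(R\right)\leq0$ exactly on the closed interval $\left[R_{-},R_{+}\right]$. The hypothesis (\ref{eq:asloz}) asserts precisely that $R\in\left[R_{-},R_{+}\right]$, whence $\varphi\left(R\right)\leq0$, that is $pR^{2}+q\leq R$.

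Combining the two displays, $\mathscr{N}\left(\mathcal{T}\left(\widetilde{M}\right)\right)\leq pR^{2}+q\leq R$, so $\mathcal{T}\left(\widetilde{M}\right)\in\mathscr{M}_{R}$; since $\widetilde{M}\in\mathscr{M}_{R}$ was arbitrary, $\mathcal{T}\left(\mathscr{M}_{R}\right)\subset\mathscr{M}_{R}$. There is essentially no obstacle in this step: all the genuine work was front-loaded into establishing (\ref{eq:olol}) (via the derivative bounds (\ref{eq:ikiioz}) and (\ref{eq:sloos})--(\ref{eq:msppp}) and the comparison $q''\leq q$), and what remains is only the elementary fact that an upward parabola with nonnegative discriminant is nonpositive exactly between its two roots. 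The one point deserving a remark is that $R_{-}$ and $R_{+}$ are both nonnegative (immediate from $p>0$, $q\geq0$, and $1-4pq\in\left[0,1\right]$), so that the interval in (\ref{eq:asloz}) is nonempty and lies in $\left(0,\infty\right)$, making $\mathscr{M}_{R}$ a genuine nonempty convex set to which the conclusion applies.
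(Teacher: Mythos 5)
Your proposal is correct and follows essentially the same route as the paper: invoke the bound $\mathscr{N}\left(\mathcal{T}\left(\widetilde{M}\right)\right)\leq pR^{2}+q$ from (\ref{eq:olol}) and reduce the inclusion to the scalar inequality $pR^{2}-R+q\leq0$, which holds precisely when $1-4pq\geq0$ and $R$ lies between the two roots of the quadratic, i.e. under (\ref{eq:sjiid}) and (\ref{eq:asloz}). Your added remarks ($p>0$, nonnegativity of the roots) are harmless extra detail but change nothing essential.
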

\begin{proof}
For $\widetilde{M}\in\mathcal{\mathscr{M}}_{R}$ we have (\ref{eq:olol}).
Thus to have $\mathcal{T}\left(\widetilde{M}\right)\in\mathcal{\mathcal{\mathscr{M}}_{R}},\forall\widetilde{M}\in\mathcal{\mathscr{M}}_{R},$ it
is enough that $R>0$ satisfies the inequality $pR^{2}+q\leq R$ i.e.
$pR^{2}-R+q\leq0.$

Now $R>0$ satisfies the preceding inequality if we have: 
\begin{equation}
\begin{cases}
1-4pq\geq0\\
\dfrac{1-\sqrt{1-4pq}}{2p}\leq R\leq\dfrac{1+\sqrt{1-4pq}}{2p}.
\end{cases}\label{eq:oolloz}
\end{equation}
\end{proof}

\subsection{Existence theorem}
\begin{prop}
\label{lsoos}Suppose $pq\leq\dfrac{1}{4}.$ Then the system $\Sigma^{1}$
(\ref{eq:ssdffz}-\ref{eq:yui}) has a non-negative solution $\widetilde{N}=\left(\widetilde{N_{1}},\widetilde{N_{2}},\widetilde{N_{3}},\widetilde{N_{4}}\right)\in C\left(\begin{array}{r}
\mathscr{P}'\end{array};\R^{4}\right)$ such that the partial derivatives $\dfrac{\partial\widetilde{N_{i}}}{\partial\eta_{1}},\dfrac{\partial\widetilde{N_{i}}}{\partial\eta_{2}},\dfrac{\partial\widetilde{N_{i}}}{\partial\eta_{3}}$
are defined in $\mathring{\mathscr{P}'},$ except possibly on a finite
number of planes, are continuous and bounded for $i=1,2,3,4,$ and
such that

\begin{equation}
{\displaystyle \max_{1\leq i\leq4}\left\{ \left\Vert \widetilde{N_{i}}\right\Vert _{\infty},\left\Vert \dfrac{\partial\widetilde{N_{i}}}{\partial\eta_{1}}\right\Vert _{\infty},\left\Vert \dfrac{\partial\widetilde{N_{i}}}{\partial\eta_{2}}\right\Vert _{\infty},\left\Vert \dfrac{\partial\widetilde{N_{i}}}{\partial\eta_{3}}\right\Vert _{\infty}\right\} }\leq\dfrac{1+\sqrt{1-4pq}}{2p}.\label{eq:ppmp}
\end{equation}
\end{prop}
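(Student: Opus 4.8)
The plan is to obtain the solution as a fixed point of the operator $\mathcal{T}$ of (\ref{eq:kozep}) via Schauder's theorem (Theorem \ref{sccssq}), applied on the ball $\mathscr{M}_{R}$ for an optimally chosen radius.

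First I would set $R=\dfrac{1+\sqrt{1-4pq}}{2p}$. Since $p>0$ this is a positive real number, and the standing hypothesis $pq\leq\dfrac{1}{4}$ guarantees $1-4pq\geq0$, so $R$ is well defined; moreover $R$ manifestly lies in the admissible interval $\left[\dfrac{1-\sqrt{1-4pq}}{2p},\dfrac{1+\sqrt{1-4pq}}{2p}\right]$ of (\ref{eq:asloz}). I would then verify, one by one, the hypotheses of Theorem \ref{sccssq} for $\mathcal{T}$ acting on $\mathscr{M}_{R}$: by Proposition \ref{prop::odlp}, $\mathscr{M}_{R}$ is a non-empty convex subset of the normed space $C\left(\mathscr{P}';\R^{4}\right)$; by Proposition \ref{prop::eolpa} applied with this $R$, $\mathcal{T}\left(\mathscr{M}_{R}\right)\subset\mathscr{M}_{R}$; by Proposition \ref{prop::mppp}, $\mathcal{T}$ is continuous; and by Proposition \ref{prop::opps}, $\mathcal{T}$ is compact on $\mathscr{M}_{R}$. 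Schauder's theorem then produces a fixed point $\widetilde{N}=\left(\widetilde{N_{1}},\widetilde{N_{2}},\widetilde{N_{3}},\widetilde{N_{4}}\right)\in\mathscr{M}_{R}$ of $\mathcal{T}$.

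It remains to read off the asserted properties. As noted immediately after (\ref{eq:kozep}), a fixed point of $\mathcal{T}$ is precisely a solution of $\Sigma^{1}$, so $\widetilde{N}$ solves $\Sigma^{1}$; and by the non-negativity Theorem \ref{thm::etaatyeyaiioa-1-1}, this solution is non-negative. Since $\mathscr{M}_{R}\subset E^{4}$, the partial derivatives $\dfrac{\partial\widetilde{N_{i}}}{\partial\eta_{1}},\dfrac{\partial\widetilde{N_{i}}}{\partial\eta_{2}},\dfrac{\partial\widetilde{N_{i}}}{\partial\eta_{3}}$ are defined in $\mathring{\mathscr{P}'}$ except possibly on a finite number of planes and are continuous and bounded there, for $i=1,2,3,4$. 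Finally, $\widetilde{N}\in\mathscr{M}_{R}$ means exactly $\mathscr{N}\left(\widetilde{N}\right)\leq R$; since $\mathscr{N}\left(\widetilde{N}\right)=\max\left\{\left\Vert\widetilde{N}\right\Vert,\left\Vert\dfrac{\partial\widetilde{N}}{\partial\eta_{1}}\right\Vert,\left\Vert\dfrac{\partial\widetilde{N}}{\partial\eta_{2}}\right\Vert,\left\Vert\dfrac{\partial\widetilde{N}}{\partial\eta_{3}}\right\Vert\right\}$ and each of these norms is itself the maximum over $i$ of the corresponding $\left\Vert\cdot\right\Vert_{\infty}$, this inequality is literally the bound (\ref{eq:ppmp}).

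The point is that all the analytic work — continuity, compactness, and especially the self-map estimate $\mathscr{N}\left(\mathcal{T}\left(\widetilde{M}\right)\right)\leq pR^{2}+q$ of (\ref{eq:olol}), which forces the quadratic inequality $pR^{2}-R+q\leq0$ — has already been carried out in the preceding propositions, so there is essentially no obstacle left: the proof is pure bookkeeping. The only genuine choice is the radius $R$, and taking the larger root $\dfrac{1+\sqrt{1-4pq}}{2p}$ of $pR^{2}-R+q=0$ (rather than any other admissible value from (\ref{eq:asloz})) is exactly what delivers the sharp constant appearing in (\ref{eq:ppmp}).
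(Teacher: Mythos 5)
Your proposal is correct and follows essentially the same route as the paper: both apply Schauder's theorem to $\mathcal{T}$ on the convex set $\mathscr{M}_{R}$, invoking Propositions \ref{prop::odlp}, \ref{prop::mppp}, \ref{prop::opps} and \ref{prop::eolpa}, then read off non-negativity from Theorem \ref{thm::etaatyeyaiioa-1-1} and the bound from $\mathscr{N}\left(\widetilde{N}\right)\leq R$. The only cosmetic difference is that you fix $R=\dfrac{1+\sqrt{1-4pq}}{2p}$ while the paper allows any $R$ in the interval (\ref{eq:asloz}); either choice yields (\ref{eq:ppmp}).
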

\begin{proof}
For any $\mathcal{\mathscr{M}}_{R}=\left\{ \widetilde{M}=\left(\widetilde{M_{i}}\right)_{i=1}^{4}\in E^{4}:\mathscr{N}\left(\widetilde{M}\right)\leq R\right\} ,\left(R>0\right),$
such that $\left(1-\sqrt{1-4pq}\right)/2p\leq R\leq\left(1+\sqrt{1-4pq}\right)/2p,$
$\mathcal{\mathcal{\mathscr{M}}_{R}}$ is a non-empty convex subset
of $C\left(\mathscr{P}';\R^{4}\right)$ . From propositions \ref{prop::mppp},
\ref{prop::opps} and \ref{prop::eolpa} , $\mathcal{T}$ is continuous
and compact on $\mathcal{\mathcal{\mathscr{M}}_{R}}$, and $\mathcal{T}\left(\mathcal{\mathcal{\mathscr{M}}_{R}}\right)\subset\mathcal{\mathcal{\mathcal{\mathscr{M}}_{R}}}.$
Thus according to Schauder's theorem \ref{sccssq}, $\mathcal{T}$
has a fixed point $\widetilde{N}\in\mathcal{\mathcal{\mathscr{M}}_{R}}.$

We thus have $\mathscr{N}\left(\widetilde{N}\right)\leq R\leq\left(1+\sqrt{1-4pq}\right)/2p$.
From subsection \ref{subsec:Fixed-point-problem}, $\widetilde{N}=\left(\widetilde{N_{1}},\widetilde{N_{2}},\widetilde{N_{3}},\widetilde{N_{4}}\right)$
is a solution of problem $\Sigma^{1}.$ $\widetilde{N}=\left(\widetilde{N_{1}},\widetilde{N_{2}},\widetilde{N_{3}},\widetilde{N_{4}}\right)$
is non-negative from theorem \ref{thm::etaatyeyaiioa-1-1}. On the
other hand, $\widetilde{N}\in\mathscr{M}_{R}\implies\widetilde{N}\in E^{4},$
hence $\widetilde{N}\in C\left(\begin{array}{r}
\mathscr{P}'\end{array};\R^{4}\right)$ and $\dfrac{\partial\widetilde{N_{i}}}{\partial\eta_{1}},\dfrac{\partial\widetilde{N_{i}}}{\partial\eta_{2}},\dfrac{\partial\widetilde{N_{i}}}{\partial\eta_{3}}$
are defined in $\mathring{\mathscr{P}'},$ except possibly on a finite
number of planes, are continuous and bounded $i=1,2,3,4.$
\end{proof}
For $u\in C\left(\left[0;T\right]\times\left[a_{1},b_{1}\right]\times\left[a_{2},b_{2}\right];\R\right)$
let us put 
\begin{equation}
\left\Vert u\right\Vert _{\infty}=\sup_{\left(t,x,y\right)\in\left[0;T\right]\times\left[a_{1},b_{1}\right]\times\left[a_{2},b_{2}\right]}\left|u\left(t,x,y\right)\right|\label{eq:koqq-2-2}
\end{equation}

and for $N=\left(N_{i}\right)_{i=1}^{4}\in C\left(\left[0;T\right]\times\left[a_{1},b_{1}\right]\times\left[a_{2},b_{2}\right];\R^{4}\right)$
\begin{equation}
\left\Vert N\right\Vert =\max_{1\leq i\leq4}\left\Vert N_{i}\right\Vert _{\infty}\label{eq:qkos-2-2}
\end{equation}

For $u:\left[0;T\right]\times\left[a_{1},b_{1}\right]\times\left[a_{2},b_{2}\right]\longrightarrow\R$
with domain $D_{u}\subset\left[0;T\right]\times\left[a_{1},b_{1}\right]\times\left[a_{2},b_{2}\right]$
such that $u$ is bounded on $D_{u},$ let us put 
\begin{equation}
\left\Vert u\right\Vert _{\infty}=\sup_{\left(t,x,y\right)\in D_{u}}\left|u\left(t,x,y\right)\right|\label{eq:koqq-2-1-1}
\end{equation}

and for $N=\left(N_{i}\right)_{i=1}^{4}\in C\left(D_{N};\R^{4}\right),$
\begin{equation}
\left\Vert N\right\Vert =\max_{1\leq i\leq4}\left\Vert N_{i}\right\Vert _{\infty}.\label{eq:qkos-2-1-2}
\end{equation}

\subsection{Proof of the main theorem\textit{ \ref{thm:Suppose-.-Then}}}
\begin{proof}
From proposition (\ref{lsoos}), and the change of variables $\mathscr{F}$
, we deduce that the problem $\Sigma^{0}$ has a solution $N=\left(N_{i}\right)_{i=1}^{4}\equiv N\left(t,x,y\right)$
such that $N\left(t,x,y\right)=\widetilde{N}\left(\eta_{1},\eta_{2},\eta_{3}\right)$
where $\widetilde{N}$ is a solution of $\Sigma^{1}.$

We have 
\[
\left\Vert \widetilde{N_{i}}\right\Vert _{\infty}=\sup_{\left(\eta_{1},\eta_{2},\eta_{3}\right)\in\mathscr{P}'}\left|\widetilde{N_{i}}\left(\eta_{1},\eta_{2},\eta_{3}\right)\right|=\sup_{\left(t,x,y\right)\in\mathscr{P}}\left|N_{i}\left(t,x,y\right)\right|=\left\Vert N_{i}\right\Vert _{\infty}
\]
hence (\ref{eq:ppmp}) yields 
\begin{align}
{\displaystyle \left\Vert N\right\Vert =\max_{1\leq i\leq4}\left\Vert N_{i}\right\Vert _{\infty}} & \leq\dfrac{1+\sqrt{1-4pq}}{2p}.
\end{align}

One has from the inverse of the change of variables,$$N\left(t,x,y\right)=\widetilde{N}\left(\dfrac{1}{c}x;\dfrac{1}{2}t-\dfrac{1}{2c}x+\dfrac{1}{2c}y;\dfrac{1}{2}t-\dfrac{1}{2c}x-\dfrac{1}{2c}y\right)$$
hence 
\begin{equation}
\begin{cases}
\dfrac{\partial N_{i}}{\partial t}=\dfrac{1}{2}\dfrac{\partial\widetilde{N_{i}}}{\partial\eta_{2}}+\dfrac{1}{2}\dfrac{\partial\widetilde{N_{i}}}{\partial\eta_{3}}\\
\dfrac{\partial N_{i}}{\partial x}=\dfrac{1}{c}\dfrac{\partial\widetilde{N_{i}}}{\partial\eta_{1}}-\dfrac{1}{2c}\dfrac{\partial\widetilde{N_{i}}}{\partial\eta_{2}}-\dfrac{1}{2c}\dfrac{\partial\widetilde{N_{i}}}{\partial\eta_{3}}\\
\dfrac{\partial N_{i}}{\partial y}=\dfrac{1}{2c}\dfrac{\partial\widetilde{N_{i}}}{\partial\eta_{2}}-\dfrac{1}{2c}\dfrac{\partial\widetilde{N_{i}}}{\partial\eta_{3}}
\end{cases}.\label{eq:osoo}
\end{equation}

We deduce that 
\begin{align}
\left\Vert \dfrac{\partial N_{i}}{\partial t}\right\Vert _{\infty} & \leq\dfrac{1}{2}\left\Vert \dfrac{\partial\widetilde{N_{i}}}{\partial\eta_{2}}\right\Vert _{\infty}+\dfrac{1}{2}\left\Vert \dfrac{\partial\widetilde{N_{i}}}{\partial\eta_{3}}\right\Vert _{\infty}\nonumber \\
\left\Vert \dfrac{\partial N_{i}}{\partial t}\right\Vert _{\infty} & \leq\dfrac{1+\sqrt{1-4pq}}{2p}\label{eq:slso}
\end{align}
\begin{align}
\left\Vert \dfrac{\partial N_{i}}{\partial x}\right\Vert _{\infty} & \leq\dfrac{1}{c}\left\Vert \dfrac{\partial\widetilde{N_{i}}}{\partial\eta_{1}}\right\Vert _{\infty}+\dfrac{1}{2c}\left\Vert \dfrac{\partial\widetilde{N_{i}}}{\partial\eta_{2}}\right\Vert _{\infty}+\dfrac{1}{2c}\left\Vert \dfrac{\partial\widetilde{N_{i}}}{\partial\eta_{3}}\right\Vert _{\infty}\nonumber \\
\left\Vert \dfrac{\partial N_{i}}{\partial x}\right\Vert _{\infty} & \leq\dfrac{2}{c}\dfrac{1+\sqrt{1-4pq}}{2p}\label{eq:slops}
\end{align}
\begin{align}
\left\Vert \dfrac{\partial N_{i}}{\partial y}\right\Vert _{\infty} & \leq\dfrac{1}{2c}\left\Vert \dfrac{\partial\widetilde{N_{i}}}{\partial\eta_{2}}\right\Vert _{\infty}+\dfrac{1}{2c}\left\Vert \dfrac{\partial\widetilde{N_{i}}}{\partial\eta_{3}}\right\Vert _{\infty}\nonumber \\
\left\Vert \dfrac{\partial N_{i}}{\partial y}\right\Vert _{\infty} & \leq\dfrac{1}{c}\dfrac{1+\sqrt{1-4pq}}{2p}.\label{eq:slsp}
\end{align}
Thus we can deduce (\ref{eq:loosqz}).

Also, $\dfrac{\partial\widetilde{N_{i}}}{\partial\eta_{1}},\dfrac{\partial\widetilde{N_{i}}}{\partial\eta_{2}},\dfrac{\partial\widetilde{N_{i}}}{\partial\eta_{3}}$
are defined except possibly on a finite number of planes including
the four planes with respective equations 
\begin{equation}
-c\eta_{2}-c\eta_{3}=a_{1},-c\eta_{1}-2c\eta_{3}=a_{2},c\eta_{1}+2c\eta_{2}=b_{2},2c\eta_{1}+c\eta_{2}+c\eta_{3}=b_{1}.\label{eq:lxoo}
\end{equation}
We deduce that $\dfrac{\partial N_{i}}{\partial t},\dfrac{\partial N_{i}}{\partial x},\dfrac{\partial N_{i}}{\partial y}$
are defined except possibly on a finite number of planes including
the transformed of the planes with equations (\ref{eq:lxoo}) by the
inverse $\mathscr{F}^{-1}$; direct calculations using $\mathscr{F}^{-1}$
give the equations (\ref{eq:ldoopz}).

Suppose that the problem $\Sigma^{0}$ have two solutions $M$ and
$N$ satisfying 
\begin{equation}
\begin{cases}
{\displaystyle \left\Vert M\right\Vert =\max_{1\leq i\leq4}\left\Vert M_{i}\right\Vert _{\infty}}\leq\dfrac{1+\sqrt{1-4pq}}{2p}\\
{\displaystyle \left\Vert N\right\Vert =\max_{1\leq i\leq4}\left\Vert N_{i}\right\Vert _{\infty}}\leq\dfrac{1+\sqrt{1-4pq}}{2p}
\end{cases}.\label{mmspp}
\end{equation}
$\widetilde{M},\widetilde{N}$ defined on $\mathscr{P}'$ by $\widetilde{M}\left(\eta_{1},\eta_{2},\eta_{3}\right)=M\left(t,x,y\right)$
and $\widetilde{N}\left(\eta_{1},\eta_{2},\eta_{3}\right)=N\left(t,x,y\right)$
are solutions of problem $\Sigma^{1}.$

We have $\left\Vert \widetilde{N_{i}}\right\Vert _{\infty}=\left\Vert N_{i}\right\Vert _{\infty}\text{ and }\left\Vert \widetilde{M_{i}}\right\Vert _{\infty}=\left\Vert M_{i}\right\Vert _{\infty},$
hence \\
$\left\Vert \widetilde{N}\right\Vert =\max_{1\leq i\leq4}\left\Vert \widetilde{N_{i}}\right\Vert _{\infty}=\max_{1\leq i\leq4}\left\Vert N_{i}\right\Vert _{\infty}=\left\Vert N\right\Vert $
and $\left\Vert \widetilde{M}\right\Vert =\left\Vert M\right\Vert .$

Equations (\ref{mmspp}) yields ${\displaystyle \left\Vert \widetilde{M}\right\Vert }\leq\left(1+\sqrt{1-4pq}\right)/2p,{\displaystyle \left\Vert \widetilde{N}\right\Vert }\leq\left(1+\sqrt{1-4pq}\right)/2p.$
Hence (\ref{eq:lsoopa-2}) yields 
\begin{multline}
\left\Vert \mathcal{T}\left(\widetilde{M}\right)-\mathcal{T}\left(\widetilde{N}\right)\right\Vert \leq p'\cdot2\cdot\dfrac{1+\sqrt{1-4pq}}{2p}\left\Vert \widetilde{M}-\widetilde{N}\right\Vert \\
\leq\dfrac{p'}{p}\cdot\left(1+\sqrt{1-4pq}\right)\left\Vert \widetilde{M}-\widetilde{N}\right\Vert .\label{eq:lsoopa-1}
\end{multline}
But $\widetilde{M}$ and $\widetilde{N}$ are fixed points of $\mathcal{T},$
hence (\ref{eq:lsoopa-1}) is written 
\begin{align}
\left\Vert \widetilde{M}-\widetilde{N}\right\Vert  & \leq\dfrac{p'}{p}\cdot\left(1+\sqrt{1-4pq}\right)\left\Vert \widetilde{M}-\widetilde{N}\right\Vert \label{eq:lsoopa-1-1}
\end{align}
i.e. 
\begin{align}
\left(1-\dfrac{p'}{p}\cdot\left(1+\sqrt{1-4pq}\right)\right)\left\Vert \widetilde{M}-\widetilde{N}\right\Vert  & \leq0.\label{eq:lsoopa-1-1-1}
\end{align}

From (\ref{eq:lsoopa-2}) and (\ref{eq:loaoao}) we have 
\begin{equation}
\dfrac{p'}{p}=\dfrac{\max\Biggl\{ T,{\displaystyle \frac{1}{c}}\left(b_{1}-a_{1}\right),{\displaystyle \frac{1}{c}}\left(b_{2}-a_{2}\right)\Biggl\}}{1+2\cdot\max\Biggl\{4T;{\displaystyle \frac{2}{c}}\left(b_{1}-a_{1}\right);{\displaystyle \frac{1}{c}}\left(b_{2}-a_{2}\right)\Biggr\}}\equiv\dfrac{m_{1}}{1+2m_{2}}
\end{equation}
 $m_{1}\leq m_{2};$$\dfrac{p'}{p}=\dfrac{m_{1}}{1+2m_{2}}=\dfrac{1}{\dfrac{1}{m_{2}}+2}\cdot\dfrac{m_{1}}{m_{2}};$
and successively $\begin{cases}
\dfrac{1}{\dfrac{1}{m_{2}}+2}<\dfrac{1}{2}\\
\dfrac{m_{1}}{m_{2}}\leq1
\end{cases};$

$\dfrac{p'}{p}=\dfrac{1}{\dfrac{1}{m_{2}}+2}\cdot\dfrac{m_{1}}{m_{2}}<\dfrac{1}{2}.$

On the other hand $1+\sqrt{1-4pq}<2,$ hence $\dfrac{p'}{p}\cdot\left(1+\sqrt{1-4pq}\right)<1$;
i.e. 
\begin{equation}
1-\dfrac{p'}{p}\cdot\left(1+\sqrt{1-4pq}\right)>0.\label{eq:loodkk}
\end{equation}

From (\ref{eq:lsoopa-1-1-1}) and (\ref{eq:loodkk}) we deduce that
\begin{align}
\left\Vert \widetilde{M}-\widetilde{N}\right\Vert  & \leq0.\label{eq:lsoopa-1-1-1-1-1}
\end{align}
Therefore $\widetilde{M}=\widetilde{N}$ and $M=N.$ Hence the uniqueness.
\end{proof}

\section{Conclusion}

We show that under some condition on the data, the initial-boundary
value problem in a rectangle for the two dimension Broadwell's 4-velocity
model has a continuous unique non-negative solution bounded with its
first partial derivatives. We provide a bound for the solution and
the derivatives. Our perspectives are now to study the case where
the data are general. 

\rule[0.5ex]{1\columnwidth}{1pt}

\end{document}